 \newtheorem{theorem}{Theorem}[section]
 \newtheorem{lemma}[theorem]{Lemma}
 \newtheorem{proposition}[theorem]{Proposition}
 \newtheorem{assumption}{Assumption}
 \theoremstyle{definition}
 \newtheorem{remark}[theorem]{Remark}
 \newcommand{\beq}{\begin{equation}}
 \newcommand{\eeq}{\end{equation}}
 \newcommand{\bea}{\begin{eqnarray}}
 \newcommand{\eea}{\end{eqnarray}}
 \newcommand{\beas}{\begin{eqnarray*}}
 \newcommand{\eeas}{\end{eqnarray*}}
 \newcommand{\beqs}{\begin{equation*}}
 \newcommand{\eeqs}{\end{equation*}}
 \newcommand{\bi}{\begin{itemize}}
 \newcommand{\ei}{\end{itemize}}
 \newcommand{\ben}{\begin{enumerate}}
 \newcommand{\een}{\end{enumerate}}
 \newcommand{\ba}{\begin{array}}
 \newcommand{\ea}{\end{array}}
 \newcommand{\R}{\mathbb R}
 \newcommand{\N}{\mathbb N}
 \newcommand{\cC}{\ensuremath{{\cal C}}}
 \newcommand{\cD}{\ensuremath{{\cal D}}}
 \newcommand{\cE}{\ensuremath{{\cal E}}}
 \newcommand{\cG}{\ensuremath{{\cal G}}}
 \newcommand{\cL}{\ensuremath{{\cal L}}}
 \newcommand{\cN}{\ensuremath{{\cal N}}}
 \newcommand{\cS}{\ensuremath{{\cal S}}}
 \newcommand{\eps}{\varepsilon}
 \newcommand{\vphi}{\varphi}
 \newcommand{\Id}{\mathop{\rm Id}\nolimits}
 \newcommand{\Tr}{\mathop{\rm Tr}\nolimits}
 \newcommand{\supp}{\mathop{\rm supp}\nolimits}
 \newcommand{\dis}[2]{\langle #1 , #2 \rangle}
 \newcommand{\notmid}{\mid\kern-0.5em\not\kern0.5em}
 \newcommand{\pd}{\ensuremath{\partial}}
 \newcommand{\LT}[1]{\ensuremath{{\cal L}#1}}
 \newcommand{\ILT}[1]{\ensuremath{{\cal L}^{-1}#1}}
\begin{document}

 \title{Generalized solutions for the Euler-Bernoulli model
 with Zener viscoelastic foundations and distributional
 forces\thanks{Supported by
 the Austrian Science Fund
(FWF) START program Y237 on 'Nonlinear distributional geometry',
 and the Serbian Ministry of Science Project 144016}
 }

 \author{
 G\"unther H\"ormann
 \footnote{Faculty of Mathematics, University of Vienna,
 Nordbergstr.\ 15, A-1090 Vienna, Austria,
 Electronic mail: guenther.hoermann@univie.ac.at}\\
 Sanja Konjik
 \footnote{Faculty of Sciences, Department of Mathematics and Informatics, University of Novi Sad,
 Trg D. Obradovi\'ca 4, 21000 Novi Sad, Serbia,
 Electronic mail: sanja.konjik@dmi.uns.ac.rs}\\
 Ljubica Oparnica
 \footnote{Faculty of Education, University of Novi Sad,
 Podgori\v cka 4, 25000 Sombor, Serbia,
 Electronic mail: ljubica.oparnica@pef.uns.ac.rs}
 }

 \date{}
 \maketitle

 \begin{abstract}
 We study the initial-boundary value problem for
 an Euler-Bernoulli beam model with discontinuous bending stiffness
 laying on a viscoelastic foundation and subjected to an axial
 force and an external load both of Dirac-type. The
 corresponding model equation is fourth order  partial differential equation
 and involves discontinuous and distributional coefficients
 as well as a distributional right-hand side. Moreover the
 viscoelastic foundation is of Zener type and described by
 a fractional differential equation with respect to time.
 We show how functional analytic methods for abstract variational
 problems can be applied in combination with regularization
 techniques to prove existence and uniqueness of generalized
 solutions.

 \vskip5pt
 \noindent
 {\bf Mathematics Subject Classification (2010):}
 35D30,  46F30,  35Q74,  26A33, 35A15
 \vskip5pt
 \noindent
 {\bf Keywords:} generalized solutions, Colombeau generalized functions,
 fractional derivatives, functional analytic methods, energy  estimates
 \end{abstract}

%%%%%%%%%%%%%%%%%%%%%%%%%%%%%%%%%%%%%%%%%%%%%%%%%%%%%%%%%%%%%%%%%%
 \section{Introduction and preliminaries}
 \label{ssec:intro}
%%%%%%%%%%%%%%%%%%%%%%%%%%%%%%%%%%%%%%%%%%%%%%%%%%%%%%%%%%%%%%%%%%

 We study existence and uniqueness of a generalized solution
 to the initial-boundary value problem
 \begin{align}
 & \pd^2_tu + Q(t,x,\pd_x)u + g = h, \label{eq:PDE} \\
 & D_t^\alpha u + u = \theta\, D_t^\alpha g + g, \label{eq:FDE}\\
 & u|_{t=0} = f_1, \quad \pd_t u|_{t=0} = f_2, \nonumber \tag{IC}\\
 & u|_{x=0} =u|_{x=1}=0, \quad
 \pd_x u|_{x=0} = \pd_x u|_{x=1}=0, \nonumber \tag{BC}
 \end{align}
 where $Q$ is a differential operator of the form
 $$
 Q u  := \pd_x^2(c(x)\pd_x^2 u) + b(x,t)\pd_x^2 u,
 $$
 $b,c,g,h,f_1$ and $f_2$ are generalized functions,
 $\theta$ a constant, $0<\theta<1$, and
 $D_t^\alpha$ denotes the left Riemann-Liouville fractional derivative
 of order $\alpha$ with respect to $t$.
 Problem (\ref{eq:PDE})-(\ref{eq:FDE}) is equivalent to
 \begin{equation} \label{eq:IntegroPDE}
 \pd^2_tu + Q(t,x,\pd_x)u + Lu = h,
 \end{equation}
 with $L$ being the (convolution) operator given by
 ($\cL$ denoting the Laplace transform)
 \begin{equation} \label{eq:operator_L}
 Lu(x,t) =
 \cL^{-1} \left(\frac{1+s^\alpha}{1+\theta s^\alpha}\right)(t)
 \ast_t u(x,t),
 \end{equation}
 with the same initial (IC) and boundary (BC) conditions
 (cf.\ Section \ref{sec:EBmodel}).

 The precise structure of the above problem is motivated by
 a model from mechanics describing the displacement of a beam
 under axial and transversal forces connected to the viscoelastic
 foundation, which we briefly discuss in Subsection
 \ref{ssec:motivation}. We then briefly introduce the theory of
 Colombeau generalized functions which forms the framework
 for our work.
 Similar problems involving distributional and generalized
 solutions to Euler-Bernoulli beam models have been studied in
 \cite{BiondiCaddemi, HoermannOparnica07, HoermannOparnica09, YavariSarkani, YavariSarkaniReddy}.
 The development of the theory in the paper is divided into
 two parts. In Section \ref{sec:abstract} we consider
 the initial-boundary value problem (\ref{eq:IntegroPDE})-(IC)-(BC)
 on the abstract level. We prove, in Theorem \ref{lemma:m-a},
 an existence result for the abstract variational problem
 corresponding to (\ref{eq:IntegroPDE})-(IC)-(BC)
 and derive energy estimates (\ref{eq:EE}) which guarantee uniqueness
 and serve as a key tool in the analysis of Colombeau
 generalized solutions. In Section \ref{sec:EBmodel},
 we first show equivalence of the system (\ref{eq:PDE})-(\ref{eq:FDE})
 with the integro-differential equation (\ref{eq:IntegroPDE}), and
 apply the results from Section \ref{sec:abstract} to the
 original problem in establishing weak solutions, if the
 coefficients are in $L^\infty$. Afterwards we allow the
 coefficients to be more irregular, set up the problem and
 show existence and uniqueness of solutions in the space of
 generalized functions.

%%%%%%%%%%%%%%%%%%%%%%%%%%%%%%%%%%%%%%%%%%%%%%%%%%%%%%%%%%%%%%%%%%
 \subsection{The Euler-Bernoulli beam with viscoelastic foundation}
 \label{ssec:motivation}
%%%%%%%%%%%%%%%%%%%%%%%%%%%%%%%%%%%%%%%%%%%%%%%%%%%%%%%%%%%%%%%%%%

 Consider the Euler-Bernoulli beam positioned on the
 viscoelastic foundation (cf.\ \cite{Atanackovic-book} for mechanical
 background).
 One can write the differential
 equation of the transversal motion
 \begin{equation} \label{eq:mot-trans motion}
 \frac{\pd^2}{d x^2}\left(A(x)\frac{\pd^2u}{d x^2}\right)
 + P(t) \frac{\pd^2u}{\pd x^2}
 + R(x) \frac{\pd^2u}{\pd t^2}
 + g(x,t)= h(x,t),
 \qquad x\in [0,1],\, t > 0,
 \end{equation}
 where
 \begin{itemize}
 \item $A$ denotes the bending stiffness and is given by
 $A(x) = EI_1 + H(x-x_0)EI_2$. Here, the constant
 $E$ is the modulus of elasticity,
 $I_1$, $I_2$, $I_1\neq I_2$, are the moments of
 inertia that correspond to the two parts of the beam,
 and $H$ is the Heaviside jump function;
 \item $R$ denotes the line density (i.e., mass per length)
 of the material and is of the form
 $ R(x)= R_0 + H(x-x_0)(R_1-R_2)$;
 \item $P(t)$ is the axial force, and is assumed to be of
 the form $P(t)=P_0 + P_1\delta(t-t_1)$, $P_0,P_1>0$;
 \item $g=g(x,t)$ denotes the force terms coming from
 the foundation;
 \item $u=u(x,t)$ denotes the displacement;
 \item $h=h(x,t)$ is the prescribed external load (e.g. when
 describing moving load it is of the form
 $h(x,t)=H_0\delta(x-ct)$, $H_0$ and $c$ are constants).
 \end{itemize}
 Since the beam is connected to the viscoelastic
 foundation there is a constitutive equation describing relation
 between the force of foundation and the displacement of the beam.
 We use the Zener generalized model given by
 \begin{equation} \label{eq:mot-const eq}
 D_t^\alpha u(x,t) + u(x,t) = \theta\, D_t^\alpha g(x,t) + g(x,t),
 \end{equation}
 where $0<\theta<1$ and $D_t^\alpha$ denotes the left
 Riemann-Liouville fractional derivative of order $\alpha$
 with respect to $t$, defined by
 $$
 D_t^\alpha u (t) = \frac{1}{\Gamma (1-\alpha)} \frac{d}{dt}
 \int_0^t \frac{u(\tau)}{(t-\tau)^\alpha} \,d\tau.
 $$

 System (\ref{eq:mot-trans motion})-(\ref{eq:mot-const eq}) is supplied
 with initial conditions
 $$
 u(x,0) = f_1(x), \qquad \pd_t u (x,0) = f_2(x),
 $$
 where $f_1$ and $f_2$ are the initial displacement and the initial
 velocity. If $f_1(x)=f_2(x)=0$ the only solution to
 (\ref{eq:mot-trans motion})-(\ref{eq:mot-const eq})
 should be $u\equiv g\equiv 0$.
 Also, the beam is considered to be
 fixed at both ends, hence boundary conditions take the form
 $$
 u(0,t) = u(1,t) = 0, \qquad \pd_x u (0,t)=\pd_x u (1,t) = 0.
 $$

 By a change of variables $t\mapsto \tau$ via
 $t(\tau) = \sqrt{R(x)}\tau$ the problem
 (\ref{eq:mot-trans motion})-(\ref{eq:mot-const eq}) is
 transformed into the standard form given in
 (\ref{eq:PDE})-(\ref{eq:FDE}). The function $c$ in
 (\ref{eq:PDE}) equals $A$ and therefore is of Heaviside
 type, and the function $b$ is then given by
 $b(x,t)= P(R(x)t)$ and its regularity properties
 depend on the assumptions on $P$ and $R$.

 As we shall see in Section \ref{sec:EBmodel}, standard functional
 analytic techniques reach as far as the following:
 boundedness of $b$ together with sufficient (spatial Sobolev)
 regularity of the initial values $f_1, f_2$ ensure existence
 of a unique solution $u\in L^2(0,T; H^2_0((0,1)))$ (in fact
 $u\in L^2(0,T; H^2_0((0,1)))$) to (\ref{eq:IntegroPDE}) with
 (IC) and (BC). However, the prominent case
 $b = p_0 + p_1 \delta (t-t_1)$ is clearly not covered by such
 a result, so in order to allow for these stronger singularities
 one needs to go beyond distributional solutions.

%%%%%%%%%%%%%%%%%%%%%%%%%%%%%%%%%%%%%%%%%%%%%%%%%%%%%%%%%%%%%%%%%%
 \subsection{Basic spaces of generalized functions}
 \label{ssec:Colombeau}
%%%%%%%%%%%%%%%%%%%%%%%%%%%%%%%%%%%%%%%%%%%%%%%%%%%%%%%%%%%%%%%%%%

 We shall set up and solve Equation (\ref{eq:IntegroPDE}) subject
 to the initial and boundary conditions (IC) and (BC)
 in an appropriate space of Colombeau generalized functions on the
 domain $X_T := (0,1) \times (0,T)$ (with $T > 0$) 
 as introduced in \cite{BO:92} and applied later on, e.g., also in
 \cite{GH:04,HoermannOparnica09}. As a few standard references for
 the general background concerning Colombeau algebras on arbitrary
 open subsets of $\R^d$ or on manifolds we may mention \cite{CCR, c1, book, MOBook}.

 We review the basic notions and facts about the kind of
 generalized functions we will employ below: we start with
 regularizing families $(u_{\eps})_{\eps\in (0,1]}$ of smooth functions
 $u_{\eps}\in  H^{\infty}(X_T)$ (space of smooth functions on
 $X_T$ all of whose derivatives belong to $L^2$). We will often
 write $(u_{\eps})_{\eps}$ to mean $(u_{\eps})_{\eps\in (0,1]}$.
 We consider the following subalgebras:

 {\it Moderate families}, denoted by $\cE_{M,H^{\infty}(X_T)} $, are
 defined by the property
 $$
 \forall\,\alpha\in\N_0^n, \exists\,p\geq 0:
 \|\pd^{\alpha}u_{\eps}\|_{L^2(X_T)}= O(\eps^{-p}), \quad \text{ as }
 \eps\to 0.
 $$

 {\it Null families}, denoted by $\cN_{H^{\infty}(X_T)}$,
 are the families in $\cE_{M,H^{\infty}(X_T)}$ satisfying
 $$
 \forall\,q\geq 0: \|u_{\eps}\|_{L^2(X_T)} = O(\eps^q) \quad \text{ as }
 \eps\to 0.
 $$
 Thus moderateness requires $L^2$ estimates with at most
 polynomial divergence as $\eps\to 0$, together with all
 derivatives,  while null families vanish very rapidly as $\eps \to 0$.
 We remark that null families in fact have all derivatives  satisfy
 estimates of the same kind (cf. \cite[Proposition 3.4(ii)]{garetto_duals}).
 Thus null families form a differential ideal in the collection
 of moderate families and we may define the {\it Colombeau algebra}
 as the factor algebra
 $$
 \cG_{H^{\infty}(X_T)} =
 \cE_{M, H^{\infty}(X_T)}/\cN_{H^{\infty}(X_T)}.
 $$
% Notation in \cite{BO:92} is $\cG_{2,2}$.

 A typical notation for the
 equivalence classes  in
 $\cG_{H^{\infty}(X_T)}$ with representative $(u_\eps)_\eps$ will be
 $[(u_\eps)_\eps]$.  Finally, the algebra $\cG_{H^{\infty}((0,1))}$
 of generalized functions on the interval $(0,1)$ is defined
 similarly and every element can be considered to be a member
 of $\cG_{H^{\infty}(X_T)}$ as well.

% \begin{remark} \label{rem_restriction}
% \begin{enumerate}
% \item
 We briefly recall a few technical remarks from \cite[Subsection 1.2]{HoermannOparnica09}:

 If  $(u_{\eps})_{\eps}$ belongs to $\cE_{M,H^{\infty}(X_T)}$
 we have smoothness up to the boundary for every $u_\eps$, i.e.\
 $u_{\eps}\in C^{\infty}([0,1] \times [0,T])$  (which follows from
 Sobolev space properties on the Lipschitz domain $X_T$; cf.\ \cite{AF:03})
 and therefore
 the restriction $u |_{t=0}$  of a generalized function
 $u \in \cG_{H^{\infty}(X_T)}$ to $t=0$ is well-defined by
 $u_{\eps}(\cdot,0)\in \cE_{M,H^{\infty}((0,1))}$.

 %\item
 If  $v \in \cG_{H^{\infty}((0,1))}$ and in addition
 we have for some representative $(v_\eps)_\eps$ of $v$ that
 $v_\eps \in H_0^{2}((0,1))$, then $v_\eps(0) = v_\eps(1) = 0$
 and $\pd_x v_\eps (0) = \pd_x v_\eps(1) = 0$. In particular,
 $$
 v(0) = v(1) = 0 \quad\text{and}\quad \pd_x v(0) = \pd_x v(1) = 0
 $$
 holds in the sense of generalized numbers.
% (the completion of the space of compactly supported
% test functions $\vphi$ on $(0,1)$ with respect to the norm
% $\sqrt{\sum_{l= 0}^2 \| \pd_x^l \vphi\|_{L^2}^2}$).
%
% Therefore, any representative $(u_\eps)_\eps$ of
% $u \in \cG_{H_0^{\infty}((0,1))}$ satisfies
% $\pd_x^l u_\eps(0) = \pd_x^l u_\eps(1) = 0$ for all $l\in\N$ and
% $\eps \in ]0,1]$. In this sense, we have for every
% $u \in \cG_{H_0^{\infty}((0,1))}$ that the boundary conditions
% $u |_{x=0} = u |_{x=1} = 0$ hold. In fact, we even have
% $\pd_x^k u |_{x=0} = \pd_x^k u |_{x=1} = 0$ for all $k\in\N$.

% \item

Note that $L^2$-estimates for parametrized families
 $u_\eps \in H^\infty(X_T)$ always yield similar $L^\infty$-estimates
  concerning $\eps$-asymptotics
 (since $H^\infty(X_T) \subset C^\infty(\overline{X_T})
 \subset W^{\infty,\infty}(X_T)$).
% \end{enumerate}
% \end{remark}

% \begin{definition}\label{RestrictionInG}
%  Let $u=[(u_{\eps})_{\eps}]\in \cG_{H^{\infty}(X_T)} $.
%  We define {\it restriction of $u$ to} $\R\times\{0\}$ as follows:
%  Let $(u_{\eps})_\eps$ be representative of $u$. By
% Remark \ref{RemLipshOnSob}  we have $u_{\eps}(\cdot,\cdot)\in
% C^{\infty}(\overline{X_T})$ for each $\eps>0$. Since the restriction map
% $H^{m+1}(X_T) \to H^m(\R^n)$ is continuous we have that
% $(u_{\eps}(\cdot,0))_{\eps}\in \cE_{M,H^{\infty}(\R^n)}$.
% Also, $(u_{\eps}(\cdot,0))_{\eps}\in \cN_{H^{\infty}(\R^n)}$
% if $(u_{\eps})_{\eps}\in \cN_{H^{\infty}(X_T)}$.
% Thus we may define the restriction of $u$ to $\R^n\times \{0\}$ as the class of
% $(u_{\eps}(\cdot,0))_{\eps}$.
% \end{definition}

 The space $H^{-\infty}(\R^d)$, i.e.\ distributions of finite order,
 is embedded (as a linear space) into $\cG_{H^{\infty}(\R^d)}$ by
 convolution regularization (cf.\ \cite{BO:92}). This embedding
 renders $H^\infty(\R^d)$ a subalgebra of $\cG_{H^{\infty}(\R^d)}$.

 Certain generalized functions possess distribution aspects, namely
 we call $u =[(u_{\eps})_{\eps}]\in \cG_{H^{\infty}}$
 {\it associated with the distribution} $w\in\cD'$, notation
 $u \approx w$, if for some (hence any) representative $(u_{\eps})_{\eps}$
 of $u$ we have $u_{\eps}\to w$ in $\cD'$, as $\eps\to 0$.

%  \begin{remark}
%   Let $u,v\in \cD'(\Om)$. If the model product $[u\cdot v]$ exists, then
%   $ \iota (u)\cdot\iota (v)\approx [u\cdot v]$ (cf. \cite[Proposition 1.4.27]{GKOS:01})
%  \end{remark}

% Intrinsic regularity theory for Colombeau generalized functions
% has been started by defining (in \cite{MOBook}) the subalgebra
% $\cG^{\infty}$, which satisfies the crucial compatibility property
% $\cG^{\infty}\cap \cD'= C^{\infty}$. In this sense, it plays the
% same role for $\cG$ as $C^{\infty}$ does for $\cD'$. Similarly
% \cite{GH:04-Z} introduces the subalgebra $\cG^{\infty}_{H^{\infty}}$
% of \emph{regular} elements in $\cG_{H^{\infty}}$ by the following condition:
% $u\in\cG_{H^{\infty}}$ belongs to $\cG_{H^{\infty}}^{\infty}$
% if and only if
% \beq \label{Col_reg}
% \exists\,p\geq 0, \forall\, \alpha\in\N_0^n:\,
% \|\pd^{\alpha}u_{\eps}\|_{L^2}=O(\eps^{-p}), \text{ as }\eps\to 0.
% \eeq
% Note that $p$ can be chosen uniformly with respect to $\alpha$.

%%%%%%%%%%%%%%%%%%%%%%%%%%%%%%%%%%%%%%%%%%%%%%%%%%%%%%%%%%%%%%%%%%
 \section{Preparations: An abstract evolution problem in variational form and the convolution-type operator $L$}
 \label{sec:abstract}
%%%%%%%%%%%%%%%%%%%%%%%%%%%%%%%%%%%%%%%%%%%%%%%%%%%%%%%%%%%%%%%%%%

 In this section we study an abstract background of equation
 (\ref{eq:IntegroPDE}) subject to the initial and boundary
 conditions (IC) and (BC) in terms of bilinear forms on
 arbitrary Hilbert spaces.
 First we shall repeat standard results and then extend them to a
 wider class of problems.
 We shall show existence of a unique solution, derive energy
 estimates, and analyze the particular form of the operator
 $L$  appearing in (\ref{eq:IntegroPDE}).

 Let $V$ and $H$ be two separable Hilbert spaces,
 where $V$ is densely embedded into $H$. We shall denote the norms
 in $V$ and $H$ by $\|\cdot\|_V$ and $\|\cdot\|_H$ respectively.
 If $V'$ denotes the dual of $V$, then $V \subset H \subset V'$
 forms a Gelfand triple.
 In the sequel we shall also make use of the Hilbert spaces
 $E_V:=L^2(0,T;V)$ with the norm
 $\|u\|_{E_V}:=(\int_0^T \|u(t)\|_V^2\,dt)^{1/2}$, and
 $E_H:=L^2(0,T;H)$ with the norm
 $\|u\|_{E_H}:=(\int_0^T \|u(t)\|_H^2\,dt)^{1/2}$.
 Since, $\|v\|_H\leq \cC\cdot\|v\|_V$, $v\in V$,
 (without loss of generality we may assume that $\cC=1$),
 it follows that $\|u\|_{E_H}\leq \|u\|_{E_V}$, $u\in E_V$,
 and $E_V\subset E_H$.
 The bilinear forms we shall deal with will be of the following
 type:

 \begin{assumption} \label{Ass1}
 Let $a(t,\cdot,\cdot)$, $a_0(t,\cdot,\cdot)$ and
 $a_1(t,\cdot,\cdot)$, $t\in [0,T]$,
 be (parametrized) families of continuous bilinear forms on $V$
 with
 $$
 a(t,u,v)=a_0(t,u,v)+ a_1(t,u,v) \qquad \forall\, u, v \in V,
 $$
 such that the 'principal part' $a_0$ and the remainder $a_1$
 satisfy the following conditions:
 \begin{itemize}
 \item[(i)] $t \mapsto a_0(t,u,v)$ is continuously
 differentiable $[0,T] \to \R$, for all $u,v \in V$;
 \item[(ii)] $a_0$ is symmetric, i.e., $a_0(t,u,v)=
 a_0(t,v,u)$, for all $u,v\in V$;
 \item[(iii)] there exist real constants $\lambda,\mu >0$ such that
 \beq \label{eq:coercivity}
 a_0(t,u,u) \geq \mu \|u\|_V^2 - \lambda \|u\|_H^2,
 \qquad \forall\, u\in V,\, \forall\, t\in [0,T];
 \eeq
 \item[(iv)] $t \mapsto a_1(t,u,v)$ is continuous $[0,T] \to \R$,
 for all $u,v\in V$;
 \item[(v)] there exists $C_1 \geq 0$ such that for all $t\in [0,T]$ and
 $u,v\in V$, $|a_1(t,u,v)|\leq C_1 \|u\|_V\, \|v\|_H$.
 \end{itemize}
 \end{assumption}

 It follows from condition (i) that there exist nonnegative
 constants $C_0$ and $C_0'$ such that for all $t\in [0,T]$ and
 $u,v\in V$,
 \beq \label{i_cons}
 |a_0(t,u,v)| \leq C_0 \|u\|_V\,\|v\|_V
 \quad \mbox{ and } \quad
 |a'_0(t,u,v)| \leq C_0' \|u\|_V \,\|v\|_V,
 \eeq
 where $a'_0(t,u,v):=\frac{d}{dt} a_0(t,u,v)$.

 It is shown in \cite[Ch.\ XVIII, p.\ 558, Th.\ 1]{DautrayLions-vol5}
 (see also \cite[Ch.\ III, Sec.\ 8]{LionsMagenes})
 that the above conditions guarantee unique solvability of the
 abstract variational problem in the following sense:

 \begin{theorem} \label{th:avp}
 Let $a(t,\cdot,\cdot)$, $t\in[0,T]$, satisfy Assumption
 \ref{Ass1}.
 Let $u_0\in V$, $u_1\in H$ and $f\in E_H$.
 Then there exists a unique $u \in E_V$ satisfying
 the regularity conditions
 \beq \label{eq:avp-reg}
 u'= \frac{du}{dt}\in E_V
 \quad \mbox{ and } \quad
 u''=\frac{d^2u}{dt^2}\in L^2(0,T;V')
 \eeq
 (here time derivatives should be understood in distributional
 sense),
 and solving the abstract initial value problem
 \begin{align}
 &\dis{u''(t)}{v} + a(t,u(t),v)=\dis{f(t)}{v},
 \qquad \forall\, v\in V, \,
 \mbox{ for a.e. } t \in (0,T) \label{eq:avp}\\
 & u(0)=u_0,\qquad u'(0)=u_1. \label{eq:avp-ic}
 \end{align}
 (Note that (\ref{eq:avp-reg}) implies that $u \in C([0,T],V)$ and
 $u' \in C([0,T],V')$. Hence it makes sense to evaluate $u(0)\in V$
 and $u'(0) \in V'$ and (\ref{eq:avp-ic}) claims that these equal
 $u_0$ and $u_1$, respectively.)
 \end{theorem}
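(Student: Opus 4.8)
The plan is to prove existence by the Faedo--Galerkin method and uniqueness by an energy identity, both powered by Assumption \ref{Ass1}. Since $V$ is separable, first fix a sequence $(w_j)_{j\in\N}\subset V$ whose finite linear combinations are dense in $V$ (hence in $H$), and seek approximate solutions $u_m(t)=\sum_{j=1}^m g_{jm}(t)\,w_j$ in $V_m:=\mathrm{span}\{w_1,\dots,w_m\}$ solving the projected equations $\dis{u_m''(t)}{w_j}+a(t,u_m(t),w_j)=\dis{f(t)}{w_j}$ for $j=1,\dots,m$, with $u_m(0)\to u_0$ in $V$ and $u_m'(0)\to u_1$ in $H$. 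Because $t\mapsto a(t,w_i,w_j)$ is continuous, $t\mapsto\dis{f(t)}{w_j}$ lies in $L^2$, and the Gram (mass) matrix is constant and invertible, this is a linear second-order ODE system for the coefficients $g_{jm}$ admitting a unique solution on all of $[0,T]$ by Carath\'eodory theory.

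The heart of the argument is a uniform a priori estimate. I would test the projected equation with $u_m'(t)$ (multiply the $j$-th equation by $g_{jm}'(t)$ and sum), obtaining $\dis{u_m''}{u_m'}+a_0(t,u_m,u_m')+a_1(t,u_m,u_m')=\dis{f}{u_m'}$. Here $\dis{u_m''}{u_m'}=\tfrac12\tfrac{d}{dt}\|u_m'\|_H^2$, and symmetry (ii) together with the $C^1$-dependence (i) allow the rewriting $a_0(t,u_m,u_m')=\tfrac12\tfrac{d}{dt}a_0(t,u_m,u_m)-\tfrac12 a_0'(t,u_m,u_m)$. Integrating over $(0,t)$, bounding the left-hand side below by the coercivity (\ref{eq:coercivity}), and estimating the remaining terms via (\ref{i_cons}), condition (v), and Cauchy--Schwarz for the $f$-term, then using Young's inequality to absorb top-order contributions, yields a Gronwall-type inequality for $\Phi(t):=\|u_m'(t)\|_H^2+\|u_m(t)\|_V^2$. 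Gronwall's lemma then bounds $\Phi$ uniformly in $m$ and $t\in[0,T]$; note that the G\aa{}rding defect $\lambda\|u_m\|_H^2$ and the lower-order form $a_1$ (whose estimate only costs $\|u_m'\|_H$) are precisely what Gronwall is designed to swallow.

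With these bounds $(u_m)$ is bounded in $L^\infty(0,T;V)$ and $(u_m')$ in $L^\infty(0,T;H)$; by Banach--Alaoglu I extract a subsequence converging weak-$\ast$, identify the limit of $u_m'$ with $u'$ distributionally, and pass to the limit in the projected equations tested against a fixed $w_j$, using density of $\bigcup_m V_m$ in $V$ to recover (\ref{eq:avp}) for all $v\in V$. The regularity stated in (\ref{eq:avp-reg}) (in particular $u''\in L^2(0,T;V')$) is then read off directly from the equation, since $u''(t)=f(t)-A(t)u(t)$ with $A(t)\colon V\to V'$ the bounded operator induced by $a(t,\cdot,\cdot)$ and $f\in E_H\hookrightarrow L^2(0,T;V')$. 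The initial conditions (\ref{eq:avp-ic}) follow from the convergence of $u_m(0),u_m'(0)$ and the continuity statement accompanying (\ref{eq:avp-reg}).

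The step I expect to be the main obstacle is making the uniqueness energy identity rigorous, rather than the existence estimate. For the smooth Galerkin approximants, testing with $u_m'$ is unproblematic; but for the difference $w=u-\tilde u$ of two solutions, which solves the homogeneous problem with $w(0)=w'(0)=0$, one only has the limited regularity in (\ref{eq:avp-reg}), so the pairing $\dis{w''}{w'}$ and its rewriting as $\tfrac12\tfrac{d}{dt}\|w'\|_H^2$ must be justified by the Lions--Magenes integration-by-parts (regularization) lemma for functions in this class. Once that identity is secured, repeating the computation of the second paragraph on $w$ forces $\Phi\equiv 0$, giving uniqueness; its quantitative version simultaneously delivers the energy estimate (\ref{eq:EE}) announced in the introduction, which is the tool needed for the subsequent Colombeau analysis.
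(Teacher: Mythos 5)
Your proposal is the standard Faedo--Galerkin argument, and in terms of method this coincides with what the paper relies on: the paper does not prove Theorem \ref{th:avp} at all, but quotes it from \cite[Ch.\ XVIII, p.\ 558, Th.\ 1]{DautrayLions-vol5} (and \cite[Ch.\ III, Sec.\ 8]{LionsMagenes}), remarking only that the proof there runs via a priori energy estimates (uniqueness) and Galerkin approximation (existence). Your construction of the approximants, the testing with $u_m'$, the rewriting of $a_0(t,u_m,u_m')$ using symmetry and $C^1$-dependence, the Gronwall absorption of the G\aa{}rding defect and of $a_1$, the weak-$\ast$ compactness and limit passage, and the recovery of $u''\in L^2(0,T;V')$ from the equation are all exactly the cited proof. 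Your identification of the uniqueness step as the delicate one is also accurate: since a solution only has $u'(t)\in H$, one cannot test the equation with $u'(t)$ directly, and some regularization (or the Ladyzhenskaya-type test function $\int_t^s u$) is needed. It is worth noting that the paper itself glosses over precisely this point later, when it sets $v:=u'(t)$ in the proof of Proposition \ref{prop:EnergyEstimates}; that step is legitimate only under the regularity the paper asserts, which brings me to the one real discrepancy.

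The statement claims $u'\in E_V=L^2(0,T;V)$, but your proof delivers only $u_m'$ bounded in $L^\infty(0,T;H)$, hence $u'\in E_H$; your remark that the regularity (\ref{eq:avp-reg}) is ``read off directly from the equation'' covers only the $u''$-part, not $u'\in E_V$. This gap cannot be closed, because $u'\in E_V$ is false under Assumption \ref{Ass1}: take $V=H^1_0((0,1))$, $H=L^2(0,1)$, $a_0(t,u,v)=\dis{\pd_x u}{\pd_x v}$ (which satisfies (iii) with $\mu=\lambda=1$), $a_1=0$, $f=0$, $u_0=0$ and $u_1\in H\setminus V$; expanding $u_1$ in the sine eigenbasis shows that the wave solution satisfies $\int_0^T\|u'(t)\|_V^2\,dt=\infty$. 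The theorem cited in \cite{DautrayLions-vol5} indeed asserts only $u'\in L^2(0,T;H)$ together with the continuity statements, so the $E_V$ in (\ref{eq:avp-reg}) must be read as $E_H$ (the same slip propagates to Lemma \ref{lemma:m-a} and Theorem \ref{th:weak sol}, whose energy estimates likewise control only $\|u'(t)\|_H$). With that correction, your proof is a sound and complete proof of the theorem, and is essentially the proof the paper delegates to the literature.
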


 \begin{remark} \label{rem:distr-intrpr}
 The precise meaning of (\ref{eq:avp}) is the following:
 $\forall\, \vphi\in\cD((0,T))$,
 $$
 \dis{\dis{u''(t)}{v}}{\vphi}_{(\cD',\cD)}
 + \dis{a(t,u(t),v)}{\vphi}_{(\cD',\cD)}
 = \dis{\dis{f(t)}{v}}{\vphi}_{(\cD',\cD)},
 $$
 or equivalently,
 $$
 \int_0^T \dis{u(t)}{v}\vphi''(t)\,dt
 + \int_0^T a(t,u(t),v)\vphi(t)\,dt
 = \int_0^T \dis{f(t)}{v}\vphi(t)\,dt.
 $$
 \end{remark}

 The proof of this theorem proceeds by showing that $u$ satisfies
 a priori (energy) estimates which immediately imply uniqueness of
 the solution, and then using the Galerkin approximation method to
 prove existence of a solution. An explicit form of the energy
 estimate for the abstract variational problem
 (\ref{eq:avp-reg})-(\ref{eq:avp-ic})
 with precise dependence of all constants is derived in
 \cite[Prop.\ 1.3]{HoermannOparnica09} in the form
 \beq \label{eq:ee-ThmP1}
 \|u(t)\|^2_V + \|u'(t)\|^2_H \leq
 \left(
 D_T\|u_0\|_V^2 + \|u_1\|_H^2 + \int_0^t \|f(\tau)\|_H^2\,d\tau
 \right) \cdot e^{t\cdot F_T},
 \eeq
 where $D_T:=\frac{C_0 + \lambda (1+ T)}{\min\{1,\mu\}}$ and
 $F_T := \max \{\frac{C_0' +C_1}{\min\{1,\mu\}},
 \frac{C_1 +T+ 2)}{\min\{1,\mu\}}\}$.\\

%%%%%%%%%%%%%%%%%%%%%%%%%%%%%%%%%%%%%%%%%%%%%%%%%%%%%%%%%%%%%%%%%%
 \subsection{Existence of a solution to the abstract variational problem}
 \label{ssec:existence}
%%%%%%%%%%%%%%%%%%%%%%%%%%%%%%%%%%%%%%%%%%%%%%%%%%%%%%%%%%%%%%%%%%

 We shall now prove a similar result for a slightly modified
 abstract variational problem, which is to encompass our problem
 (\ref{eq:IntegroPDE}). Here in addition to the bilinear forms we
 shall consider "causal" operators $L:L^2(0,T_1;H)\to L^2(0,T_1;H)$,
 $\forall\,T_1<T$, which satisfy the estimate: $\exists\, C_L>0$
 such that
 \beq \label{eq:L-estimate}
 \|Lu\|_{L^2(0,T_1;H)} \leq C_L \|u\|_{L^2(0,T_1;H)},
 \eeq
 where $C_L$ is independent of $T_1$.

 \begin{lemma} \label{lemma:m-a}
 Let $a(t,\cdot,\cdot)$, $t\in[0,T]$, satisfy Assumption
 \ref{Ass1}.
 Let $f_1\in V$, $f_2\in H$ and $h\in E_H$.
 Let $L:E_H\to E_H$ satisfy (\ref{eq:L-estimate}).
 Then there exists a $u\in E_V$ satisfying the regularity conditions
 $$
 u'=\frac{du}{dt} \in E_V \quad \mbox{ and } \quad
 u''=\frac{d^2 u}{dt^2} \in L^2(0,T;V')
 $$
 and solving the abstract initial value problem
 \begin{align}
 &\dis{u''(t)}{v} + a(t,u(t),v) + \dis{Lu(t)}{v}= \dis{h(t)}{v},
 \qquad \forall\,  v\in V,\,
 \mbox{ for a.e. } t \in (0,T), \label{eq:avp-L}\\
 & u(0)=f_1, \qquad u'(0) = f_2. \label{eq:avp-L-ic}
 \end{align}
 Moreover, we have $u\in\cC([0,T];V)$ and $u'\in\cC([0,T];H)$.
 \end{lemma}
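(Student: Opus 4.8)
The plan is to treat $L$ as a lower-order perturbation and to solve (\ref{eq:avp-L})--(\ref{eq:avp-L-ic}) by successive approximation, using Theorem~\ref{th:avp} at every step and feeding the energy estimate~(\ref{eq:ee-ThmP1}) into the iteration. The point that makes this work is that, since $h\in E_H$ and $L$ maps $E_H$ into $E_H$, for any $w\in E_V\subset E_H$ the right-hand side $h-Lw$ again lies in $E_H$. Thus I would let $u^{(0)}\in E_V$ be the solution given by Theorem~\ref{th:avp} for the datum $f=h$ and initial values $f_1,f_2$, and then define $u^{(n+1)}$ inductively as the unique solution, furnished by the same theorem, of
\[
\dis{(u^{(n+1)})''(t)}{v}+a(t,u^{(n+1)}(t),v)=\dis{h(t)-Lu^{(n)}(t)}{v},\quad \forall\,v\in V,
\]
for a.e.\ $t$, with $u^{(n+1)}(0)=f_1$ and $(u^{(n+1)})'(0)=f_2$. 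Each $u^{(n)}$ then satisfies the regularity (\ref{eq:avp-reg}), and (by the symmetric case of the underlying theory, cf.\ \cite{DautrayLions-vol5}) the continuity $u^{(n)}\in C([0,T];V)$, $(u^{(n)})'\in C([0,T];H)$.

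The core of the argument is a Gronwall/contraction estimate for the successive differences $w_n:=u^{(n+1)}-u^{(n)}$. Subtracting two consecutive equations, $w_n$ solves the $L$-free problem with \emph{vanishing} initial data and source $-Lw_{n-1}\in E_H$. Applying the energy estimate~(\ref{eq:ee-ThmP1}) with $u_0=u_1=0$ and $f=-Lw_{n-1}$, and using $t\le T$, gives for every $t\in[0,T]$
\[
\|w_n(t)\|_V^2+\|w_n'(t)\|_H^2 \le e^{TF_T}\int_0^t\|Lw_{n-1}(\tau)\|_H^2\,d\tau.
\]
Now I would invoke the causal bound~(\ref{eq:L-estimate}) on the subinterval $(0,t)$ together with $\|\cdot\|_H\le\|\cdot\|_V$, obtaining $\int_0^t\|Lw_{n-1}(\tau)\|_H^2\,d\tau\le C_L^2\int_0^t\|w_{n-1}(\tau)\|_V^2\,d\tau$. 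Writing $K:=C_L^2e^{TF_T}$ this is the Volterra recursion $\|w_n(t)\|_V^2\le K\int_0^t\|w_{n-1}(\tau)\|_V^2\,d\tau$, which iterates to the factorial bound $\|w_n(t)\|_V^2\le M_0\,(Kt)^n/n!$ with $M_0:=\sup_{[0,T]}\|w_0\|_V^2<\infty$. In particular $\sum_n\|w_n\|_{C([0,T];V)}<\infty$.

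The remainder is then essentially bookkeeping. Summability shows $(u^{(n)})_n$ is Cauchy in $C([0,T];V)$, and the $\|w_n'\|_H$-term in the same estimate shows $((u^{(n)})')_n$ is Cauchy in $C([0,T];H)$; hence $u^{(n)}\to u$ in $C([0,T];V)$ and $(u^{(n)})'\to u'$ in $C([0,T];H)$, which already yields the asserted continuity $u\in C([0,T];V)$, $u'\in C([0,T];H)$. Continuity of $L$ gives $Lu^{(n)}\to Lu$ in $E_H$; reading the second derivative off the equation, $\dis{(u^{(n)})''(t)}{v}=\dis{h(t)-Lu^{(n)}(t)}{v}-a(t,u^{(n)}(t),v)$, bounds $(u^{(n)})''$ in $L^2(0,T;V')$ and makes it convergent there. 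Passing to the limit in the variational identity, interpreted distributionally in $t$ as in Remark~\ref{rem:distr-intrpr}, shows that the limit $u$ solves~(\ref{eq:avp-L}) with the required regularity, and the initial conditions~(\ref{eq:avp-L-ic}) survive the limit thanks to the $C([0,T];V)$ and $C([0,T];H)$ convergence.

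I expect the main obstacle to be this last passage to the limit: specifically, confirming convergence of the second-derivative term in $L^2(0,T;V')$ and, above all, upgrading the pointwise-in-$t$ control of $\|u'(t)\|_H$ furnished by~(\ref{eq:ee-ThmP1}) to genuine continuity $u'\in C([0,T];H)$ (as opposed to mere boundedness). It is exactly the uniform-in-$n$ energy bound on the differences $w_n$, combined with the continuity already available for each iterate, that makes this upgrade legitimate.
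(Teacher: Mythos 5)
Your core argument is correct and follows the same skeleton as the paper's proof --- an iteration in which $u^{(n+1)}$ solves the $L$-free problem of Theorem~\ref{th:avp} with source $h-Lu^{(n)}$, with the energy estimate (\ref{eq:ee-ThmP1}) applied to successive differences --- but your convergence mechanism is genuinely different, and cleaner. The paper discards the $t$-dependence of the source term: it bounds $\int_0^t\|L(u_{l-1}-u_{k-1})(\tau)\|_H^2\,d\tau$ by $C_L^2\,\|u_{l-1}-u_{k-1}\|_{E_H}^2$, obtaining only a geometric contraction with factor $\gamma_T=C_L\sqrt{T}\,e^{T F_T/2}$; since $\gamma_T<1$ cannot be guaranteed on all of $[0,T]$, the paper must first work on a small interval $[0,T_1]$ with $\gamma_{T_1}<1$ and then run a continuation argument (at most $T/T_1$ steps), which in turn forces it to establish endpoint regularity $u(T_1)\in V$, $u'(T_1)\in H$ via convergence in $C([0,T_1];V)$ and $C([0,T_1];H)$. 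You instead use the fact that (\ref{eq:L-estimate}) holds on every subinterval $(0,t)$ with a constant independent of the interval --- exactly what the ``causal'' hypothesis was designed to give --- so the difference estimate retains its Volterra structure, $\|w_n(t)\|_V^2\le K\int_0^t\|w_{n-1}(\tau)\|_V^2\,d\tau$, and iterates to the factorial bound $M_0(Kt)^n/n!$. This yields summability and Cauchy-ness in $C([0,T];V)$ (and of the derivatives in $C([0,T];H)$) on the whole interval in one stroke, eliminating the small-time restriction, the continuation step, and the endpoint-regularity discussion; your passage to the limit in the distributional formulation is then the same as the paper's, and your treatment of the initial conditions is if anything more explicit.

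One loose end: the lemma asserts $u'\in E_V=L^2(0,T;V)$, whereas your proof delivers only $u'\in C([0,T];H)$ (plus $u''\in L^2(0,T;V')$ read off from the equation). Your factorial estimate controls $\|w_n'(t)\|_H$, not $\|w_n'(t)\|_V$, so $E_V$-convergence of the derivatives does not follow from it. In fairness, the paper is equally thin at this point --- it asserts that ``similarly'' the $u_k'$ converge in $E_V$, although its own difference estimate (\ref{EE1}) likewise controls only the $H$-norm of the derivatives --- but to claim the full regularity statement you should close this gap, e.g.\ by a bootstrap: once $u\in E_V$ is known to solve (\ref{eq:avp-L}), it solves the $L$-free problem of Theorem~\ref{th:avp} with right-hand side $h-Lu\in E_H$ and data $f_1,f_2$, and identifying $u$ with the solution furnished by that theorem (which requires its uniqueness to hold in a class large enough to contain $u$, i.e.\ with $u'\in E_H$ only) gives $u'\in E_V$ and $u''\in L^2(0,T;V')$. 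Without some such argument, the regularity part of the lemma is not fully proved.
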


 Here we give a proof based on an iterative procedure and employing
 Theorem \ref{th:avp} and the energy estimate (\ref{eq:ee-ThmP1})
 in each step. Notice that the precise meaning of (\ref{eq:avp-L})
 (in distributional sense) is explained in Remark
 \ref{rem:distr-intrpr}.

 \begin{proof}
 Let $u_0\in E_H$ be arbitrarily chosen and consider the initial
 value problem for $u$ in the sense of Remark \ref{rem:distr-intrpr}
 \begin{align}
 & \dis{u''(t)}{v}+ a(t,u(t),v)+ \dis{Lu_0(t)}{v}= \dis{h(t)}{v},
 \qquad \forall\,  v\in V,\,
 \mbox{ for a.e. } t \in (0,T), \label{eq:sol u1}\\
 & u(0) = f_1,\quad u'(0) = f_2. \nonumber
 \end{align}
 By Theorem \ref{th:avp} there exists a unique
 $u_1\in E_V$ satisfying $u_1'\in E_V$,
 $u_1''\in L^2(0,T;V')$, and solving (\ref{eq:sol u1}).
 Consider now (\ref{eq:sol u1}) with $Lu_1$ instead of
 $Lu_0$. As above, by Theorem \ref{th:avp}, one obtains
 a unique solution $u_2\in E_V$ with $u_2'\in E_V$
 and $u_2''\in L^2(0,T;V')$.
 Repeating this procedure we obtain a sequence of functions
 $\{u_k\}_{k\in\N}\in E_V$,
 satisfying $u_k'\in E_V$, $u_k''\in L^2(0,T;V')$, and
 solving the following problems: for each $k\in\N$,
 \begin{align*}
 & \dis{u_k''(t)}{v} + a(t,u_k(t),v) + \dis{Lu_{k-1}(t)}{v}= \dis{h(t)}{v},
 \qquad \forall\,  v\in V,\,
 \mbox{ for a.e. } t \in (0,T), \\
 & u_k(0) = f_1,\quad u_k'(0) = f_2.
 \end{align*}
 Also, for all $k\in\N$, $u_k$ satisfies the energy estimate of
 type (\ref{eq:ee-ThmP1}):
 $$
 \|u_k(t)\|_V^2 + \|u_k'(t)\|_H^2
 \leq \left(
 D_T \,\|f_1\|_V^2 + \|f_2\|_H^2
 + \int_0^t\|(h-Lu_{k-1})(\tau)\|_H^2 \,d\tau
 \right) \cdot e^{t\cdot F_T},
 $$
 where the constants $D_T$ and $F_T$ are independent of $k$. We claim that $\{u_k\}_{k\in\N}$
converges in $E_V$.
 To see this, we first note that $u_l-u_k$ solves
 \begin{align*}
 & \dis{(u_l-u_k)''(t)}{v} + a(t,(u_l-u_k)(t),v) +
 \dis{L(u_{l-1}-u_{k-1})(t)}{v}= 0, \quad \forall\,v\in V,\,
 \mbox{ for a.e. } t \in (0,T), \\
 & (u_l-u_k)(0) = 0, \quad (u_l-u_k)'(0) = 0,
 \end{align*}
 and $u_l-u_k\in E_V$, with $u_l'-u_k'\in E_V$ and
 $u_l''-u_k''\in L^2(0,T;V')$. Moreover, the corresponding energy
 estimate is of the form
 \begin{equation} \label{EE1}
 \|(u_l-u_k)(t)\|_V^2 + \|(u_l-u_k)'(t)\|_H^2 \leq
 e^{t\cdot F_T} \cdot
 \int_0^t \|L(u_{l-1}-u_{k-1})(\tau)\|_H^2\,d\tau.
 \end{equation}
 Thus,
 $$
 \|(u_l-u_k)(t)\|_V^2
 \leq e^{T\cdot F_T} \cdot \int_0^T
 \|L(u_{l-1}-u_{k-1})(\tau)\|_H^2\,d\tau
 = e^{T\cdot F_T} \cdot \|L(u_{l-1}-u_{k-1})\|_{E_H}^2.
 $$
 Integrating from $0$ to $T$ and using assumption
 (\ref{eq:L-estimate}) on $L$, one obtains
 \begin{equation} \label{uk-ul_estimates}
 \|u_l-u_k\|_{E_V}
 \leq \gamma_T \|u_{l-1}-u_{k-1}\|_{E_H},
 \end{equation}
 where $\gamma_T:=C_L \sqrt{T} e^{\frac{T\cdot F_T}{2}}$.
 Taking now $l=k+1$ in (\ref{uk-ul_estimates}) successively, yields
 $$
 \|u_{k+1}-u_k\|_{E_V} \leq \gamma_T^k \|u_{1}-u_{0}\|_{E_H} \leq
 \gamma_T^k \|u_{1}-u_{0}\|_{E_V},
 $$
 and hence
 $$
 \|u_l-u_k\|_{E_V} \leq \|u_l-u_{l-1}\|_{E_V}+\ldots
 +\|u_{k+1}-u_{k}\|_{E_V}
 \leq \sum_{i=l-1}^{k} \gamma_T^i \|u_{1}-u_{0}\|_{E_V}.
 $$

 We may choose $T_1<T$ such that $\gamma_{T_1}< 1$, hence
 $\sum_{i=0}^\infty \gamma_{T_1}^i$ converges.
 Note that $t\mapsto\gamma_t$ is increasing. By abuse of notation
 we denote $L^2(0,T_1;V)$ again by $E_V$.
 This further implies that $\{u_k\}_{k\in\N}$ is a Cauchy sequence
 and hence convergent in $E_V$, say
 $u:= \lim_{k\to\infty} u_k$. Similarly,
 one can show convergence of $u_k'$ in $E_V$, i.e., existence of
 $v:= \lim_{k\to\infty} u_k' \in E_V$.
 In the distributional setting $\lim_{k\to\infty} u_k' = u'$,
 and therefore $u'=v\in E_V$
 (cf.\ \cite[Ch.\ XVIII, p.\ 473, Prop.\ 6]{DautrayLions-vol5}).

 We also have to show that $u$ solves equation (\ref{eq:avp-L}).
 Let $\vphi\in \cD((0,T))$. Then
 \begin{align*}
 \dis{\dis{h(t)}{v}}{\vphi} & =
 \dis{\dis{u_k''(t)}{v}}{\vphi} + \dis{a(t,u_k(t),v)}{\vphi}
 + \dis{\dis{Lu_{k-1}(t)}{v}}{\vphi} \\
 & = \dis{\dis{u_k}{v}}{\vphi''} + \dis{a(t,u_k(t),v)}{\vphi}
 + \dis{\dis{Lu_{k-1}(t)}{v}}{\vphi} \\
 & \to \dis{\dis{u}{v}}{\vphi''} + \dis{a(t,u(t),v)}{\vphi}
 + \dis{\dis{Lu(t)}{v}}{\vphi} \\
 & = \dis{\dis{u''(t)}{v}}{\vphi} + \dis{a(t,u(t),v)}{\vphi}
 + \dis{\dis{Lu(t)}{v}}{\vphi}.
 \end{align*}
 Here we used that $\vphi''\in \cD((0,T))$.
 Therefore $u$ solves (\ref{eq:avp-L}) on the time interval
 $[0,T_1]$. The initial conditions are satisfied by construction
 of $u$.

 It remains to extend this result on existence of a solution to
 the whole interval $[0,T]$.

 Since $T_1$ is independent on the initial conditions, if $T>T_1$
 one needs at most $\frac{T}{T_1}$ steps to reach
 convergence in $E_V$. In fact, one has to show
 regularity at the end point $T_1$ of the interval $[0,T_1]$
 on which the solution exists, i.e.,
 $$
 u(T_1)\in V \quad \mbox{ and } \quad
 u'(T_1)\in H.
 $$
 To see this, it suffices to show that $u_k\to u$ in
 $Y_V:=\cC([0,T_1];V)$ and $u_k'\to u'$ in $Y_H:=\cC([0,T_1];H)$.
 From (\ref{EE1}) and assumption (\ref{eq:L-estimate}) on $L$
 we obtain
 $$
 \|(u_l-u_k)(t)\|_V^2
 \leq e^{T_1\cdot F_{T_1}} C_L^2 \int_0^{T_1} \|(u_l-u_k)(\tau)\|_H^2 \,d\tau
 \leq e^{T_1\cdot F_{T_1}} C_L^2 \int_0^{T_1} \|(u_l-u_k)(\tau)\|_V^2 \,d\tau.
 $$
 Taking first the square root and then the supremum over all
 $t\in[0,T]$ yields
 $$
 \|u_l-u_k\|_{Y_V} \leq \gamma_{T_1} \|u_l-u_k\|_{Y_V}.
 $$
 Since $\gamma_{T_1}<1$ this implies that $\{u_k\}_{k\in\N}$
 is a Cauchy sequence in $Y_V$.
 Similarly,
 $$
 \|(u_l-u_k)'(t)\|_H^2 \leq e^{T_1\cdot F_{T_1}} C_L^2
 \int_0^{T_1} \|(u_l-u_k)(\tau)\|_V^2 \,d\tau,
 $$
 which upon taking the supremum gives
 $$
 \|(u_l-u_k)'\|_{Y_H} \leq \gamma_{T_1}
 \|u_l-u_k\|_{Y_V},
 $$
 thus $u'_k\to u'$ in $Y_H$ (due to the already established
 convergence of $u_k$ in $Y_V$),
 and $u'(T_1)\in H$. This proves the claim.
 \end{proof}

%%%%%%%%%%%%%%%%%%%%%%%%%%%%%%%%%%%%%%%%%%%%%%%%%%%%%%%%%%%%%%%%%%
 \subsection{Energy estimates}
 \label{ssec:ee}
%%%%%%%%%%%%%%%%%%%%%%%%%%%%%%%%%%%%%%%%%%%%%%%%%%%%%%%%%%%%%%%%%%

 In Section \ref{sec:EBmodel} we shall need
 a priori (energy) estimate for problem (\ref{eq:IntegroPDE}).
 In fact, for the verification of moderateness in the Colombeau
 setting it will be crucial to know all constants in the energy
 estimate precisely. Therefore, we shall now derive it.

 \begin{proposition} \label{prop:EnergyEstimates}
 Under the assumptions of Lemma \ref{lemma:m-a},
 let $u$ be a solution to the abstract
 variational problem (\ref{eq:avp-L})-(\ref{eq:avp-L-ic}).
 Then, for each $t\in [0,T]$,
 \begin{equation}\label{eq:EE}
  \|u(t)\|^2_V + \|u'(t)\|^2_H \leq
  \left(
 D_T\|f_1\|_V^2 + \frac{1}{\nu} \left(
 \|f_2\|_H^2 + \int_0^{t} \|h(\tau)\|^2_H \,d\tau
 \right) \right)
 \cdot e^{t\cdot F_T},
 \end{equation}
 where $\nu:=\min\{1,\mu\}$,
 $D_T:=\frac{C_0 + \lambda (1+ T)}{\nu}$ and
 $F_T := \max \{\frac{C_0' +C_1+ C_L}{\nu},
 \frac{C_1 +2+ \lambda (1+ T)}{\nu}\}$.
 \end{proposition}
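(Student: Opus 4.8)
The plan is to run the classical energy method for second-order evolution equations: test the variational equation (\ref{eq:avp-L}) against $v=u'(t)$, integrate in time, and close with Gronwall's inequality. The derivation parallels that of (\ref{eq:ee-ThmP1}) for Theorem \ref{th:avp}; the only genuinely new feature is the non-local term $\dis{Lu(t)}{u'(t)}$, which must be controlled through (\ref{eq:L-estimate}). First I would justify inserting $v=u'(t)\in V$ (admissible since $u'\in E_V$) into (\ref{eq:avp-L}) for a.e.\ $t$, and invoke the two standard identities $\dis{u''(t)}{u'(t)}=\tfrac12\frac{d}{dt}\|u'(t)\|_H^2$ (valid because $u'\in E_V$ and $u''\in L^2(0,T;V')$) and, using symmetry (ii) together with differentiability (i) of $a_0$, $a_0(t,u(t),u'(t))=\tfrac12\frac{d}{dt}a_0(t,u(t),u(t))-\tfrac12 a_0'(t,u(t),u(t))$. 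Substituting and integrating from $0$ to $t$ yields the energy identity
$$
\|u'(t)\|_H^2 + a_0(t,u(t),u(t)) = \|f_2\|_H^2 + a_0(0,f_1,f_1) + \int_0^t\left[a_0'(\tau,u,u) - 2a_1(\tau,u,u') - 2\dis{Lu}{u'} + 2\dis{h}{u'}\right]d\tau.
$$

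Next, coercivity (iii) bounds the left-hand side from below by $\|u'(t)\|_H^2 + \mu\|u(t)\|_V^2 - \lambda\|u(t)\|_H^2$; the term $\lambda\|u(t)\|_H^2$ is moved to the right and controlled by writing $u(t)=f_1+\int_0^t u'(\tau)\,d\tau$ in $H$ and applying Cauchy--Schwarz in time, which produces the factor $\lambda(1+T)$ both on $\|f_1\|_V^2$ (using $\|f_1\|_H\le\|f_1\|_V$) and on $\int_0^t\|u'\|_H^2$. The remaining integrands are estimated by (\ref{i_cons}) for $a_0'$, by condition (v) and Young's inequality for $a_1$, and by Cauchy--Schwarz followed by Young for $\dis{h}{u'}$. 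The \emph{crucial} point is the non-local term: since (\ref{eq:L-estimate}) is only an $L^2$-in-time bound and not a pointwise one, $\dis{Lu(\tau)}{u'(\tau)}$ cannot be estimated pointwise, so I would first integrate and apply Cauchy--Schwarz in $L^2(0,t;H)$ and then (\ref{eq:L-estimate}) to get $\big|\int_0^t\dis{Lu}{u'}\,d\tau\big| \le C_L\|u\|_{L^2(0,t;H)}\|u'\|_{L^2(0,t;H)}$, after which a (suitably weighted) Young inequality distributes the constant $C_L$. Collecting the coefficients of $\int_0^t\|u\|_V^2$ and of $\int_0^t\|u'\|_H^2$, together with the data terms $\|f_2\|_H^2$, $C_0\|f_1\|_V^2$ and $\int_0^t\|h\|_H^2$, leads to an inequality of the form $\Psi(t)\le A+\int_0^t(\alpha\|u\|_V^2+\beta\|u'\|_H^2)\,d\tau$ with $\Psi(t):=\|u'(t)\|_H^2+\mu\|u(t)\|_V^2$ and $A=(C_0+\lambda(1+T))\|f_1\|_V^2+\|f_2\|_H^2+\int_0^t\|h\|_H^2$.

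Finally, since $\nu=\min\{1,\mu\}$ gives $\Psi(t)\ge\nu(\|u(t)\|_V^2+\|u'(t)\|_H^2)$, the integrand is dominated by $\tfrac{\max\{\alpha,\beta\}}{\nu}\,\Psi$, so Gronwall's inequality yields $\Psi(t)\le A\,e^{t\cdot\max\{\alpha,\beta\}/\nu}$. Dividing by $\nu$ then reproduces exactly the bracket of (\ref{eq:EE}): the coefficient of $\|f_1\|_V^2$ becomes $D_T=\tfrac{C_0+\lambda(1+T)}{\nu}$, while $\|f_2\|_H^2$ and $\int_0^t\|h\|_H^2$ acquire the common factor $\tfrac{1}{\nu}$, and the exponential rate is $F_T=\max\{\alpha,\beta\}/\nu$. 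The main obstacle, and essentially the only new point relative to (\ref{eq:ee-ThmP1}), is this treatment of $L$: one must integrate before using the operator bound, and the precise (asymmetric) split in Young's inequality at that step is what fixes the exact dependence of $F_T$ on $C_L$. The only other technical care needed is the justification of the integration-by-parts identity for $\dis{u''}{u'}$, which rests on the regularity $u'\in E_V$, $u''\in L^2(0,T;V')$ guaranteed by Lemma \ref{lemma:m-a}.
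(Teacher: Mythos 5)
Your proposal is correct and follows essentially the same route as the paper's proof: testing (\ref{eq:avp-L}) with $v=u'(t)$, the symmetry-based identity for $a_0$, coercivity together with the bound $\|u(t)\|_H^2\le(1+t)\bigl(\|f_1\|_V^2+\int_0^t\|u'\|_H^2\,d\tau\bigr)$ to absorb the $\lambda\|u(t)\|_H^2$ term, integration in time \emph{before} invoking (\ref{eq:L-estimate}) on the nonlocal term, and finally Gronwall. The only cosmetic difference is the order of operations on the $L$-term (you apply Cauchy--Schwarz in $L^2(0,t;H)$ and then the operator bound and Young, while the paper uses pointwise Cauchy--Schwarz in $H$ plus Young under the integral and then bounds $\|Lu\|_{L^2(0,t_1;H)}^2$); these are equivalent and lead to the same constants $D_T$ and $F_T$.
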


 \begin{proof}
 Setting $v:= u'(t)$ in (\ref{eq:avp-L})
 we obtain (as an equality of integrable functions with
 respect to $t$)
 $$
 \dis{u''(t)}{u'(t)} + a(t,u(t),u'(t)) + \dis{Lu(t)}{u'(t)}
 = \dis{h(t)}{u'(t)}.
 $$
 Since $a(t,u,v)=a_0(t,u,v)+a_1(t,u,v)$ and
 $\dis{u''(t)}{u'(t)}= \frac{1}{2}\frac{d}{dt}\dis{u'(t)}{u'(t)}
 =\frac{1}{2}\frac{d}{dt}\|u'(t)\|^2_H$, we have
 $$
 \frac{d}{dt}\|u'(t)\|^2_H = - 2 a_0(t,u(t),u'(t))
 - 2a_1(t,u(t), u'(t)) - 2 \dis{Lu(t)}{u'(t)}
 + 2 \dis{h(t)}{u'(t)}.
 $$
 Integration from $0$ to $t_1$, for arbitrary $0< t_1 \leq T$,
 gives
 \beas
 \|u'(t_1)\|^2_H - \|f_2\|^2_H
 &=&
 - 2 \int_0^{t_1} a_0(t,u(t),u'(t))\,dt
 -2 \int_0^{t_1} a_1(t,u(t), u'(t))\,dt \\
 && \quad
 - 2 \int_0^{t_1}\dis{Lu(t)}{u'(t)}\,dt
 + 2 \int_0^{t_1} \dis{h(t)}{u'(t)}\,dt.
 \eeas
 Note that $\frac{d}{dt}a_0(t,u(t),u(t))= a_0'(t,u(t),u(t))
 +a_0(t,u'(t),u(t))+a_0(t,u(t),u'(t))$ and hence,
 by Assumption \ref{Ass1} (ii),
 $2a_0(t,u'(t),u(t))=\frac{d}{dt}a_0(t,u(t),u(t))-a_0'(t,u(t),u(t))$.
 This yields
 \begin{align}
 & LHS := \|u'(t_1)\|^2_H  + a_0(t_1, u(t_1), u(t_1))
 = \|f_2\|^2_H + a_0(0,u(0),u(0))
 - \int_0^{t_1} a_0'(t,u(t),u(t))\,dt \nonumber \\
 & \qquad
 - 2\int_0^{t_1} a_1(t,u(t), u'(t)) \,dt
 - 2\int_0^{t_1} \dis{Lu(t)}{u'(t)}\,dt
 + 2\int_0^{t_1} \dis{h(t)}{u'(t)}\,dt
 = : RHS. \label{eq:srednja}
 \end{align}
 Further, by (\ref{i_cons}), Assumption \ref{Ass1} (v), the
 Cauchy-Schwartz inequality, the inequality $2ab\leq a^2+b^2$,
 and the assumption (\ref{eq:L-estimate}) on $L$ we have
 \begin {align*}
 |RHS|
 & \leq \|f_2\|^2_H + C_0\|u(0)\|^2_V
 + C_0' \int_0^{t_1} \|u(t)\|^2_V \,dt \\
 & \quad
 + 2 C_1 \int_0^{t_1} \|u(t)\|_V \| u'(t)\|_H \,dt
 + 2 \int_0^{t_1} \|Lu(t)\|_H \|u'(t)\|_H \,dt
 + 2 \int_0^{t_1} \|h(t)\|_H \|u'(t)\|_H \,dt \\
 & \leq \|f_2\|^2_H + C_0\|f_1\|^2_V
 + (C_0' +C_1) \int_0^{t_1} \| u(t)\|^2_V \,dt \\
 & \quad
 + (C_1 +2) \int_0^{t_1} \| u'(t)\|^2_H \,dt
 + \|Lu\|^2_{L^2(0,t_1;H)}
 + \int_0^{t_1} \|h(t)\|^2_H \,dt \\
 & \leq \|f_2\|^2_H + C_0\|f_1\|^2_V
 + (C_0' +C_1+ C_L) \int_0^{t_1} \| u(t)\|^2_V \,dt \\
 & \quad
 + (C_1 +2) \int_0^{t_1} \|u'(t)\|^2_H \,dt
 + \int_0^{t_1} \|h(t)\|^2_H \,dt.
 \end{align*}
 Further, it follows from (\ref{eq:coercivity}) that
 $$
 LHS = \|u'(t_1)\|^2_H  + a_0(t_1, u(t_1), u(t_1))
 \geq \|u'(t_1)\|^2_H  + \mu \|u(t_1)\|^2_V - \lambda \|u(t_1)\|^2_H,
 $$
 and therefore (\ref{eq:srednja}) yields
 \begin{align*}
 \|u'(t_1)\|^2_H + \mu \|u(t_1)\|^2_V
 & \leq
 \lambda \|u(t_1)\|^2_H + C_0 \|f_1\|^2_V +\|f_2\|^2_H
 + \int_0^{t_1} \|h(t)\|^2_H \,dt \\
 & \qquad
 + (C_0' +C_1+ C_L) \int_0^{t_1} \| u(t)\|^2_V \,dt
 + (C_1 +2) \int_0^{t_1} \|u'(t)\|^2_H \,dt.
 \end{align*}
 As shown in \cite{HoermannOparnica09} we have that
 $\|u(t)\|^2_H \leq (1+t)(\|f_1\|^2_V + \int_0^{t} \|u'(s)\|^2_H \,ds)$,
 hence
 \begin{align*}
 \|u(t_1)\|^2_V + \|u'(t_1)\|^2_H \leq D_T \|f_1\|^2_V +
 \frac{1}{\nu} \left(
 \|f_2\|^2_H + \int_0^{t_1} \|h(t)\|^2_H \,dt
 \right)
 + F_T \int_0^{t_1} (\| u(t)\|^2_V + \|u'(t)\|^2_H )\,dt.
 \end{align*}
 where $\nu:=\min\{1,\mu\}$,
 $D_T:=\frac{C_0 + \lambda (1+ T)}{\nu}$ and
 $F_T := \max \{\frac{C_0' +C_1+ C_L}{\nu},
 \frac{C_1 +2+ \lambda (1+ T)}{\nu} \}$.
 The claim now follows from Gronwall's lemma.
 \end{proof}

 As a consequence of Proposition \ref{prop:EnergyEstimates},
 one also has uniqueness of the solution in Lemma \ref{lemma:m-a}.

 \begin{theorem}
 Under the assumptions of Lemma \ref{lemma:m-a} there exists a
 unique $u\in E_V$ satisfying the regularity conditions
 $u'\in E_V$ and $u''\in L^2(0,T;V')$,
 and solving the abstract initial value problem
 (\ref{eq:avp-L})-(\ref{eq:avp-L-ic}).
 Moreover, $u\in\cC([0,T];V)$ and $u'\in\cC([0,T];H)$.
 \end{theorem}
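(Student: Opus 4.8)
The plan is to read off existence together with the regularity and continuity statements directly from Lemma \ref{lemma:m-a}, and to obtain uniqueness by applying the energy estimate (\ref{eq:EE}) of Proposition \ref{prop:EnergyEstimates} to the difference of two solutions. Indeed, Lemma \ref{lemma:m-a} already produces a $u \in E_V$ with $u' \in E_V$ and $u'' \in L^2(0,T;V')$ solving (\ref{eq:avp-L})-(\ref{eq:avp-L-ic}), and moreover asserts $u \in \cC([0,T];V)$ and $u' \in \cC([0,T];H)$; hence the only genuinely new content to be established is uniqueness.

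To prove uniqueness I would take two solutions $u_1$ and $u_2$ with the stated regularity and set $w := u_1 - u_2$. Since the duality pairing $\dis{\cdot}{\cdot}$, the bilinear form $a(t,\cdot,\cdot)$ and the operator $L$ are all linear, subtracting the two instances of (\ref{eq:avp-L}) shows that $w$ solves the same variational equation but with right-hand side $h = 0$ and homogeneous initial data $w(0) = 0$, $w'(0) = 0$. Because $E_V$ and $L^2(0,T;V')$ are vector spaces, $w$ inherits the regularity $w \in E_V$, $w' \in E_V$, $w'' \in L^2(0,T;V')$, so that $w$ is itself a solution of precisely the problem treated in Proposition \ref{prop:EnergyEstimates}, now with vanishing data.

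Applying (\ref{eq:EE}) to $w$ with $f_1 = 0$, $f_2 = 0$ and $h = 0$, the entire right-hand side of the estimate collapses to zero, whence
\[
\|w(t)\|_V^2 + \|w'(t)\|_H^2 \leq 0 \qquad \text{for every } t \in [0,T].
\]
As the left-hand side is a sum of squares this forces $w(t) = 0$ for all $t$, i.e.\ $u_1 = u_2$, which is the asserted uniqueness.

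I do not anticipate a real obstacle here: the whole argument rests on the linearity of every operator entering the variational equation, which is exactly what allows the difference of two solutions to satisfy the hypotheses of the energy estimate with zero data. The only point demanding a little care is to confirm that $w$ genuinely retains the regularity required to invoke Proposition \ref{prop:EnergyEstimates}, but this is immediate from the vector-space structure of the spaces involved.
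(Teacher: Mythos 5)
Your proposal is correct and follows essentially the same route as the paper: existence, regularity and the continuity statements are taken directly from Lemma \ref{lemma:m-a}, and uniqueness is obtained by applying the energy estimate (\ref{eq:EE}) of Proposition \ref{prop:EnergyEstimates} to the difference of two solutions with $f_1=f_2=h\equiv 0$. Your write-up is, if anything, slightly more careful than the paper's in noting explicitly that the difference inherits the regularity needed to invoke the proposition.
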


 \begin{proof}
 Since existence of a solution is proved in Lemma \ref{lemma:m-a},
 it remains to show uniqueness part of the theorem. Thus, let
 $u$ and $w$ be solutions to the abstract initial value problem
 (\ref{eq:avp-L})-(\ref{eq:avp-L-ic}), satisfying the regularity
 conditions $u',w'\in E_V$ and $u'',w''\in L^2(0,T;V')$. Then
 $u-w$ is a solution to the homogeneous abstract problem with
 vanishing initial data
 \begin{align*}
 &\dis{(u-w''(t)}{v} + a(t,(u-w)(t),v) + \dis{L(u-w)(t)}{v}= 0,
 \qquad \forall\,  v\in V,\,
 \mbox{ for a.e. } t \in (0,T), \\
 & (u-w)(0)=0, \qquad (u-w)'(0) = 0.
 \end{align*}
 Moreover, according to Proposition \ref{prop:EnergyEstimates},
 $u-w$ satisfies the energy estimates (\ref{eq:EE}) with
 $f_1=f_2=h\equiv 0$. This implies uniqueness of the solution.
 \end{proof}

%%%%%%%%%%%%%%%%%%%%%%%%%%%%%%%%%%%%%%%%%%%%%%%%%%%%%%%%%%%%%%%%%%
 \subsection{Basic properties of the operator $L$}
 \label{ssec:L}
%%%%%%%%%%%%%%%%%%%%%%%%%%%%%%%%%%%%%%%%%%%%%%%%%%%%%%%%%%%%%%%%%%

 In this subsection we analyze our particular form of the operator
 $L$, relevant to the problem described in the Introduction.
 Therefore, we consider an operator of convolution type
 and seek for conditions which guarantee estimate
 (\ref{eq:L-estimate}).

 \begin{lemma} \label{lemma:L_1}
 Let $l\in L^2_{loc}(\R)$ with $\supp l \subset [0,\infty)$. Then
 for all $T_1\in[0,T]$, the operator $L$ defined by $Lu(x,t) :=
 \int_0^t l(s) u(x,t-s)\,ds$ maps $L^2(0,T_1;H)$ into itself, and
 (\ref{eq:L-estimate}) holds with $C_L = \|l\|_{L^2(0,T)}\cdot T $.
 \end{lemma}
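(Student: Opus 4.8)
The plan is to recognize $L$ as a convolution in the time variable acting on $H$-valued functions, and to reduce the required bound to an elementary scalar convolution estimate for the function $t\mapsto\|u(\cdot,t)\|_H$.

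First I would make sense of $Lu$ as an element of $L^2(0,T_1;H)$. Given $u\in L^2(0,T_1;H)$, set $\phi(t):=\|u(\cdot,t)\|_H$, so that $\phi\in L^2(0,T_1)$ with $\|\phi\|_{L^2(0,T_1)}=\|u\|_{L^2(0,T_1;H)}$. Since $l\in L^2_{loc}(\R)$ restricts to $L^2(0,T)\subset L^1(0,T)$ and $\supp l\subset[0,\infty)$, for a.e.\ $t\in(0,T_1)$ the map $s\mapsto l(s)\,u(\cdot,t-s)$ is Bochner integrable on $(0,t)$; hence the $H$-valued integral $Lu(\cdot,t)=\int_0^t l(s)u(\cdot,t-s)\,ds$ exists for a.e.\ $t$, and its measurability in $t$ follows from the vector-valued Tonelli/Fubini theorem. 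The core pointwise estimate then comes from the triangle inequality for the Bochner integral followed by Cauchy--Schwarz in $s$ (with the substitution $\sigma=t-s$):
\[
\|Lu(\cdot,t)\|_H \le \int_0^t |l(s)|\,\phi(t-s)\,ds
\le \|l\|_{L^2(0,t)}\Big(\int_0^t \phi(\sigma)^2\,d\sigma\Big)^{1/2}
\le \|l\|_{L^2(0,T)}\,\|u\|_{L^2(0,T_1;H)},
\]
where in the last step I used $t\le T_1\le T$ in both factors. The right-hand side is independent of $t$, i.e.\ this is an $L^\infty$-in-time bound.

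Finally, squaring this uniform bound and integrating over $t\in(0,T_1)$ gives
\[
\|Lu\|_{L^2(0,T_1;H)}^2=\int_0^{T_1}\|Lu(\cdot,t)\|_H^2\,dt
\le T_1\,\|l\|_{L^2(0,T)}^2\,\|u\|_{L^2(0,T_1;H)}^2,
\]
so that $\|Lu\|_{L^2(0,T_1;H)}\le \sqrt{T_1}\,\|l\|_{L^2(0,T)}\,\|u\|_{L^2(0,T_1;H)}$. Since $T_1\le T$, this is an estimate of the form (\ref{eq:L-estimate}) with a constant $C_L$ depending only on $T$ and $l$ and, crucially, \emph{not} on $T_1$; estimating the time integral more crudely reproduces the asserted form $C_L=\|l\|_{L^2(0,T)}\cdot T$ (the precise power of $T$ is immaterial, since the iteration in Lemma \ref{lemma:m-a} and the energy estimate only use the independence of $C_L$ from $T_1$).

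I do not expect a genuine analytic obstacle here, as the content is elementary. The two points that require care are (i) treating the convolution as a bona fide $H$-valued Bochner integral (existence for a.e.\ $t$ and measurability) rather than a purely formal expression, and (ii) ensuring that \emph{both} the factor $\|l\|_{L^2}$ and the factor arising from the $t$-integration are controlled on the fixed interval $(0,T)$, so that the resulting $C_L$ is manifestly uniform in $T_1$ — this $T_1$-uniformity is exactly the property exploited later.
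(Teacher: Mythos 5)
Your proof is correct, and it reaches the same constant, but the key inequality differs from the paper's. Both arguments reduce matters to the scalar convolution $|l|\ast\phi$ with $\phi(t):=\|u(\cdot,t)\|_H$; the paper estimates its $L^2(0,T_1)$-norm in $t$ by extending the inner integral (using $\supp l\subset[0,\infty)$) and applying Minkowski's inequality for integrals, i.e.\ the Young pairing $L^2\ast L^1\to L^2$, which yields $\|l\|_{L^2(0,T_1)}\,\|u\|_{L^1(0,T_1;H)}$, and then uses Cauchy--Schwarz in $t$ to pass to $\|u\|_{L^2(0,T_1;H)}$. You instead apply Cauchy--Schwarz in $s$ pointwise in $t$, i.e.\ the pairing $L^2\ast L^2\to L^\infty$, obtaining a bound uniform in $t$, and then integrate over $(0,T_1)$. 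Both routes give $C_L=\sqrt{T_1}\,\|l\|_{L^2(0,T)}\le\sqrt{T}\,\|l\|_{L^2(0,T)}$, uniform in $T_1$, which is the only property exploited in Lemma \ref{lemma:m-a}; both also share the cosmetic point that the asserted constant $\|l\|_{L^2(0,T)}\cdot T$ dominates $\sqrt{T}\,\|l\|_{L^2(0,T)}$ only when $T\ge 1$ (you flag this explicitly, the paper does not). What each buys: your version is marginally more elementary (no Minkowski integral inequality) and gives the stronger conclusion $Lu\in L^\infty(0,T_1;H)$, and your attention to Bochner integrability and measurability is sound where the paper leaves it implicit; on the other hand, the paper's Minkowski technique adapts verbatim to kernels $l\in L^1_{loc}$ (the pairing $L^1\ast L^2\to L^2$, as used for the bound (\ref{Lbound}) and relevant for the Mittag-Leffler kernel $l_\alpha$ when $\alpha\le 1/2$, cf.\ Lemma \ref{lemma:reg L}), whereas your pointwise $L^\infty$ bound genuinely requires $l\in L^2$.
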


 \begin{remark}
 We may think of $u$ being extended by $0$ outside $[0,T]$ to a
 function in $L^2(\R;H)$, and then identify $Lu$ with $l\ast_t u$.
 \end{remark}

 \begin{proof}
 Integration of $\|Lu(t)\|_H^2 \leq \int_0^t |l(t-s)|\|u(s)\|_H \,ds$
 from $0$ to $T_1$, $0<T_1\leq T$, yields
 \begin{multline*}
 \left(
 \int_0^{T_1}\|Lu(t)\|_H^2 \,dt
 \right)^{1/2}
  \leq \left(\int_0^{T_1} (\int_0^t |l(t-s)|\|u(s)\|_H \,ds)^2 \,dt\right)^{1/2} \\
  \leq \left(\int_0^{T_1} (\int_0^{T_1} |l(t-s)|\|u(s)\|_H \,ds)^2 \,dt\right)^{1/2}
  \leq \int_0^{T_1} \left(\int_0^{T_1} |l(t-s)|^2 \|u(s)\|^2_H \,dt\right)^{1/2} \,ds \\
  = \int_0^{T_1} \left( \int_0^{T_1} |l(t-s)|^2 \,dt\right)^{1/2} \|u(s)\|_H \,ds
  = \|l\|_{L^2(0,T_1)}\cdot  \|u\|_{L^1(0,T_1; H)} \\
 \leq \|l\|_{L^2(0,T)}\cdot T \cdot \|u\|_{L^2(0,T_1; H)},
 \end{multline*}
 where we have used the support property of $l$, Minkowski's
 inequality for integrals (c.f \cite[p.\ 194]{Folland}), and
 the Cauchy-Schwartz inequality.
 \end{proof}

 In the following lemma we discuss a regularization of $L$, which
 will be used in Section \ref{ssec:Colombeau sol}.

 \begin{lemma} \label{lemma:reg L}
 Let $l\in L^1_{loc}(\R)$ with $\supp l \subset [0,\infty)$.
 Let $\rho\in\cD(\R)$ be a mollifier ($\supp \rho\subset B_1(0)$,
 $\int \rho =1$). Define $\rho_\eps(t):=\gamma_\eps
 \rho(\gamma_\eps t)$, with $\gamma_\eps>0$ and
 $\gamma_\eps\to\infty$ as $\eps\to 0$,
 $l_\eps := l*\rho_\eps $ and
 $\tilde{L}_\eps u(t):= (l_\eps *_t u)(t)$, for
 $u\in E_H$. Then
 $\forall\,p\in[1,\infty)$,
 $l_\eps\in L^p_{loc}(\R)$ and $l_\eps\to l$
 in $L^1_{loc}(\R)$.
 \end{lemma}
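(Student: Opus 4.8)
The plan is to prove the two assertions of the lemma separately: that each regularized kernel $l_\eps$ is smooth (whence it lies in $L^p_{loc}(\R)$ for every finite $p$), and that $l_\eps \to l$ in $L^1_{loc}(\R)$.

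For the first claim I would invoke the standard regularizing effect of convolution with a test function. For fixed $t$ the integrand $s \mapsto l(s)\,\rho_\eps(t-s)$ is integrable, since $\rho_\eps$ confines $s$ to a compact set on which $l$ is integrable; moreover all $t$-derivatives of $\rho_\eps$ are bounded and compactly supported, so differentiation under the integral sign is justified to any order. Hence $l_\eps = l \ast \rho_\eps \in C^\infty(\R)$, and in particular $l_\eps \in L^p_{loc}(\R)$ for every $p \in [1,\infty)$.

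For the convergence I would localize. Fix a compact interval $K$ and choose a compact $K'$ whose interior contains the closed $1$-neighbourhood of $K$, and put $\tilde l := l\cdot\mathbf 1_{K'} \in L^1(\R)$. Because $\supp\rho_\eps \subset B_{1/\gamma_\eps}(0)$ and $\gamma_\eps\to\infty$, for all sufficiently small $\eps$ (so that $1/\gamma_\eps \leq 1$) the value $l_\eps(t)$ with $t\in K$ involves $l$ only on $K'$; thus $l_\eps = \tilde l \ast \rho_\eps$ on $K$, while $l = \tilde l$ on $K$. Consequently $\int_K |l_\eps - l| \leq \|\tilde l\ast\rho_\eps - \tilde l\|_{L^1(\R)}$, and the problem reduces to the global $L^1$-convergence of the mollification of $\tilde l$. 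Using $\int \rho_\eps = 1$ I would write $(\tilde l \ast \rho_\eps)(x) - \tilde l(x) = \int \rho_\eps(y)\,(\tilde l(x-y)-\tilde l(x))\,dy$ and estimate, interchanging the order of integration (Minkowski's inequality for integrals) and using $\int|\rho_\eps| = \|\rho\|_{L^1}$,
\[
\|\tilde l\ast\rho_\eps - \tilde l\|_{L^1(\R)}
\leq \|\rho\|_{L^1(\R)}\,\sup_{|y|\leq 1/\gamma_\eps}\|\tau_y\tilde l - \tilde l\|_{L^1(\R)},
\]
where $\tau_y$ denotes translation by $y$.

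The main analytic input---essentially the only nontrivial point---is continuity of translation in $L^1$, namely $\|\tau_y\tilde l - \tilde l\|_{L^1}\to 0$ as $y\to 0$; this is the standard fact proved by approximating $\tilde l$ in $L^1$ by a compactly supported continuous function and exploiting the latter's uniform continuity. Since $1/\gamma_\eps\to 0$, the supremum above vanishes as $\eps\to 0$, so $\int_K|l_\eps-l|\to 0$; as $K$ was an arbitrary compact set, this is precisely convergence in $L^1_{loc}(\R)$. I remark that the hypothesis $\supp l\subset[0,\infty)$ is not needed for either conclusion; it is recorded only to ensure causality of the regularized operators $\tilde L_\eps$ in the later application.
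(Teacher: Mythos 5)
Your proof is correct, and it differs from the paper's in an instructive way. For the convergence $l_\eps\to l$ in $L^1_{loc}$ both arguments are, at heart, the standard mollification proof resting on continuity of translation in $L^1$; but you make the localization explicit by truncating to $\tilde l = l\cdot\mathbf 1_{K'}\in L^1(\R)$ before invoking it, whereas the paper substitutes $\gamma_\eps s=\tau$ and applies dominated convergence in $\tau$, quoting translation continuity on $K$ directly. Your truncation is actually the cleaner handling of this point, since translation continuity is a statement about globally integrable functions and $l$ is only locally integrable (the paper's dominating function $2|\rho(\tau)|\,\|l\|_{L^1(K)}$ glosses over the fact that the translated term samples $l$ slightly outside $K$). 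The genuine divergence is in the first claim: you obtain $l_\eps\in L^p_{loc}$ qualitatively, via smoothness of $l\ast\rho_\eps$ by differentiation under the integral sign, while the paper instead runs Minkowski's inequality for integrals to get the quantitative bound $\|l_\eps\|_{L^p(K)}\le \|l\|_{L^1(K+B_1(0))}\,\gamma_\eps^{1-\frac1p}\,\|\rho\|_{L^p(B_1(0))}$. Both settle the lemma as stated, but the paper's explicit $\gamma_\eps^{1-\frac1p}$ asymptotics is exactly what gets reused afterwards: the continuity/moderateness lemma for $L_\eps$ needs $\|l_\eps\|_{L^2(0,T)}\le\gamma_\eps^{1/2}\|l\|_{L^1(0,T)}\|\rho\|_{L^2}$, and Theorem \ref{th:main} needs $C_{L,\eps}=O(\log\frac1\eps)$ under the scaling $\gamma_\eps=O(\log\frac1\eps)$; your qualitative argument would have to be supplemented by essentially the paper's computation at that later stage. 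Your closing remark that $\supp l\subset[0,\infty)$ is irrelevant for the two conclusions and only serves causality in the application is also correct.
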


 \begin{proof}
 Let $K$ be a compact subset of $\R$. Then
 \begin{multline*}
 \|l_\eps\|_{L^p(K)}
  = \|l*_t\rho_\eps\|_{L^p(K)}
  = \left(\int_K |\int_{-\infty}^\infty
 l(\tau)\rho_\eps(t-\tau)\,d\tau|^p \,dt\right)^{1/p} \\
  \leq \left(\int_K \left(\int_{-\infty}^\infty
 |l(\tau)||\rho_\eps(t-\tau)|\,d\tau\right)^p \,dt\right)^{1/p}
  \leq \left(\int_K \left( \int_{K+ B_1(0)}
 |l(\tau)||\rho_\eps(t-\tau)|\,d\tau \right)^p \,dt\right)^{1/p} \\
  \leq \int_{K+ B_1(0)} \left(\int_K
 |l(\tau)|^p|\rho_\eps(t-\tau)|^p \,dt\right)^{1/p} \,d\tau
  = \int_{K+ B_1(0)} |l(\tau)|\left(\int_K
 |\rho_\eps(t-\tau)|^p \,dt\right)^{1/p} \,d\tau \\
  = \int_{K+ B_1(0)} |l(\tau)| \|\rho_\eps\|_{L^p(B_1(0))} \,d\tau
  = \|l\|_{L^1(K+B_1(0))} \|\rho_\eps\|_{L^p(B_1(0))} \\
  = \|l\|_{L^1(K+B_1(0))}
 \cdot \gamma_\eps^{1-\frac{1}{p}} \cdot
 \|\rho\|_{L^p(B_1(0))}.
 \end{multline*}
 where the second inequality follows from the support properties
 of $l$ and $\rho$ ($t-\tau\in B_1(0), t\in K$ implies
 $\tau\in K+B_1(0)$), while for the third inequality we used
 Minkowski's inequality for integrals.
 Further, we shall show that $l_\eps\to l$ in $L^1_{loc}(\R)$.
 Let $K\subset\subset\R$. We claim that
 $\int_K|l_\eps-l|\to 0 $, as $\eps\to 0$. Indeed,
\begin{multline*}
 \int_K  |\int_\R l(t-s)\rho_\eps(s)\,ds -
  l(t)\cdot\int_\R\rho_\eps(s)\,ds|\,dt
  =  \int_K |\int_\R (l(t-s)-l(t))\rho_\eps(s)\,ds|\,dt\\
  \stackrel{[\gamma_\eps s=\tau]}{=}
 \int_K |\int_\R (l(t-\frac{\tau}{\gamma_\eps}) -
  l(t))\rho(\tau)\,d\tau|\,dt
 \leq
 \int_K \int_\R |l(t-\frac{\tau}{\gamma_\eps}) -
   l(t)||\rho(\tau)|\,d\tau \,dt\\
  =
 \int_\R |\rho(\tau)| \int_K  |l(t-\frac{\tau}{\gamma_\eps})-l(t)|\,dt \,d\tau.
\end{multline*}
 By \cite[Prop.\ 8.5]{Folland}, we have that
 $\|l(\cdot-\frac{\tau}{\gamma_\eps})-l\|_{L^1(K)}\to 0$, as
 $\eps\to 0$ and therefore the integrand converges to 0 pointwise
 almost everywhere. Since it is also bounded
 by $2|\rho(\tau)|\|l\|_{L^1(K)}\in L^1(\R)$,
 Lebesgue's dominated convergence theorem implies the result.
 \end{proof}

%%%%%%%%%%%%%%%%%%%%%%%%%%%%%%%%%%%%%%%%%%%%%%%%%%%%%%%%%%%%%%%%%%
 \section{Weak and generalized solutions of the model equations}
 \label{sec:EBmodel}
%%%%%%%%%%%%%%%%%%%%%%%%%%%%%%%%%%%%%%%%%%%%%%%%%%%%%%%%%%%%%%%%%%

 We now come back to the problem
 (\ref{eq:PDE})-(\ref{eq:FDE})-(IC)-(BC) or
 (\ref{eq:IntegroPDE})-(IC)-(BC), and hence
 need to provide assumptions which guarantee
 that it can be interpreted in the form (\ref{eq:avp-L}), in order
 to the apply results obtained above.
 For that purpose we need to prescribe the regularity
 of the functions $c$ and $b$ which appear in $Q$.
 In Section \ref{ssec:Colombeau sol} we shall use these
 results on the level of representatives to prove existence of
 solutions in the Colombeau generalized setting.

 Thus, let
 $H := L^2(0,1)$ with the standard scalar product
 $\dis{u}{v}=\int_0^1u(x)v(x)\,dx$ and $L^2$-norm
 denoted by $\|\cdot\|_H$. Let $V$ be the Sobolev space
 $H^2_0((0,1))$, which is the completion of the space of compactly
 supported smooth functions $C^\infty_c((0,1))$
 with respect to the norm
 $\|u\|_{2} = (\sum_{k= 0}^2 \|u^{(k)}\|^2)^{1/2}$ (and inner
 product $(u,v) \mapsto \sum_{k=0}^2\dis{u^{(k)}}{v^{(k)}}$).
 Then  $V'= H^{-2}((0,1))$, which consists of distributional
 derivatives up to second order of functions in $L^2(0,1)$,
 and $V\hookrightarrow H \hookrightarrow V'$ forms a Gelfand
 triple. With this choice of spaces $H$ and $V$ we also have that
 $E_V=L^2(0,T; H^2_0((0,1)))$ and $E_H=L^2((0,1)\times (0,T))$.

 Let
 \begin{equation} \label{HypothesisOn_c_and_b}
 c\in L^\infty(0,1) \mbox{ and real},
 \qquad b\in C([0,T];L^{\infty}(0,1)),
 \end{equation}
 and suppose that there exist constants $c_1 > c_0 > 0$ such that
 \begin{equation} \label{addHypothesisOn_c}
 0 < c_0\leq c(x)\leq c_1, \qquad \mbox{ for almost every } x.
 \end{equation}
 For $t\in [0,T]$ we define the bilinear forms
 $a(t,\cdot,\cdot)$, $a_0(t,\cdot,\cdot)$
 and $a_1(t,\cdot,\cdot)$ on $V\times V$ by
 \begin{equation} \label{sesforma}
 a_0(t,u,v) = \dis{c(x)\, \pd_x^2u}{\pd_x^2v}, \qquad
 a_1(t,u,v) = \dis{b(x,t)\, \pd_x^2u}{v},
 \qquad u,v\in V,
 \end{equation}
 and
 \beq \label{sesform2}
 a(t,u,v) = a_0(t,u,v) + a_1(t,u,v).
 \eeq
 Properties (\ref{HypothesisOn_c_and_b}),
 (\ref{addHypothesisOn_c}) imply that $a_0$, $a_1$ defined as in (\ref{sesforma})
 satisfy the conditions of Assumption \ref{Ass1}
 (cf.\ \cite[proof of Th.\ 2.2]{HoermannOparnica09}).
 The specific form of the operator $L$ is designed to achieve
 equivalence of the system (\ref{eq:PDE})-(\ref{eq:FDE}) with the
 equation (\ref{eq:IntegroPDE}), which we show in the sequel.

 Let $\cS'_+$ denote the space of Schwartz' distributions
 supported in $[0,\infty)$. It is known (c.f. \cite{Oparnica02})
 that for given $z\in \cS'_+$ there is a unique $y\in\cS'_+$
 such that $D_t^\alpha z + z = \theta D_t^\alpha y + y$. Moreover,
 it is given by $y=\tilde{L}z$, where $\tilde{L}$ is linear
 convolution operator acting on $\cS'_+$ as
 \begin{equation} \label{operatorLonEs}
 \tilde{L}z: =\ILT \left(\frac{1 + s^{\alpha}}{1 + \theta s^{\alpha}}\right)
 \ast_t z, \qquad z\in\cS'_+.
 \end{equation}
 The following lemma extends the operator $\tilde{L}$ to the space
 $E_H$.

 \begin{lemma}
 Let $\tilde{L}:\cS'_+ \to \cS'_+$ be defined as in (\ref{operatorLonEs}).
 Then $\tilde{L}$ induces a continuous operator $L=\Id+L_\alpha$
 on $E_H$, where $L_\alpha$ corresponds to convolution in time
 variable with a function $l_\alpha\in L^1_{loc}([0,\infty))$.
 \end{lemma}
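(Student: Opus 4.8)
The plan is to work on the Laplace-transform side to identify the explicit multiplier, split it into a constant plus a decaying part, and then transfer the resulting convolution structure to $E_H$ using the estimate already established in Lemma \ref{lemma:L_1}. Write $m(s):=\frac{1+s^\alpha}{1+\theta s^\alpha}$ for the symbol defining $\tilde L$ in (\ref{operatorLonEs}). The first step is the algebraic decomposition
\begin{equation*}
m(s)=\frac{1+s^\alpha}{1+\theta s^\alpha}
= \frac{1}{\theta}\cdot\frac{\theta + \theta s^\alpha}{1+\theta s^\alpha}
= \frac{1}{\theta}\Big(1 + \frac{\theta-1}{1+\theta s^\alpha}\Big),
\end{equation*}
which isolates the constant part but produces the constant $1/\theta$, not $1$. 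Since the statement asserts $L=\Id+L_\alpha$, the correct split must instead be the one that puts a clean constant $1$ in front, namely
\begin{equation*}
m(s)=1+\big(m(s)-1\big)=1+\frac{1+s^\alpha-(1+\theta s^\alpha)}{1+\theta s^\alpha}
=1+\frac{(1-\theta)s^\alpha}{1+\theta s^\alpha}.
\end{equation*}
Thus the plan is to set $m_\alpha(s):=\frac{(1-\theta)s^\alpha}{1+\theta s^\alpha}$ and define $l_\alpha:=\cL^{-1}(m_\alpha)$, so that $\tilde L=\Id+L_\alpha$ where $L_\alpha z=l_\alpha\ast_t z$, matching the stated form with the constant equal to $1$.

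The central step is to show $l_\alpha\in L^1_{loc}([0,\infty))$. The plan is to analyze $m_\alpha(s)=\frac{(1-\theta)s^\alpha}{1+\theta s^\alpha}$ as a function of the Laplace variable: it tends to the constant $(1-\theta)/\theta$ as $|s|\to\infty$ and vanishes as $s\to 0$. To extract an integrable kernel I would first subtract the value at infinity, writing
\begin{equation*}
m_\alpha(s)=\frac{1-\theta}{\theta}-\frac{1-\theta}{\theta}\cdot\frac{1}{1+\theta s^\alpha},
\end{equation*}
so that $l_\alpha=\frac{1-\theta}{\theta}\,\delta - \frac{1-\theta}{\theta}\,\cL^{-1}\big(\tfrac{1}{1+\theta s^\alpha}\big)$. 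The function $\tfrac{1}{1+\theta s^\alpha}$ is the Laplace symbol of a Mittag--Leffler-type kernel $E_\alpha(-t^\alpha/\theta)$ times an explicit power weight; the standard fact (available from \cite{Oparnica02} and the fractional-calculus literature) is that its inverse Laplace transform is locally integrable on $[0,\infty)$, with an integrable singularity of order $t^{\alpha-1}$ near $t=0$. The delta contribution is a constant multiple of the identity and should be absorbed back into the $\Id$ term by re-examining the normalization: a cleaner route is to avoid the delta altogether by keeping $l_\alpha=\cL^{-1}(m_\alpha)$ and invoking directly that $m_\alpha$ is, up to the constant at infinity, the symbol of a completely monotone or Mittag--Leffler kernel, hence locally integrable.

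Once local integrability of $l_\alpha$ is in hand, the continuity of $L=\Id+L_\alpha$ on $E_H$ follows immediately: the identity is trivially bounded, and for $L_\alpha$ one applies Lemma \ref{lemma:L_1} with $l=l_\alpha$ (noting $\supp l_\alpha\subset[0,\infty)$), which gives the causal bound (\ref{eq:L-estimate}) with $C_L=\|l_\alpha\|_{L^2(0,T)}\cdot T$; if $l_\alpha$ is only $L^1_{loc}$ and not $L^2_{loc}$ because of the $t^{\alpha-1}$ singularity, I would instead estimate $\|L_\alpha u\|_{E_H}\le \|l_\alpha\|_{L^1(0,T)}\|u\|_{E_H}$ directly by Young's convolution inequality, which only requires $l_\alpha\in L^1(0,T)$. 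The main obstacle is the regularity analysis of $l_\alpha$ near $t=0$: establishing precisely that the inverse Laplace transform of $\frac{(1-\theta)s^\alpha}{1+\theta s^\alpha}$ (after removing its constant value at infinity) is locally integrable, with the correct $t^{\alpha-1}$ behavior, is the technical heart of the proof and is where I would rely on the explicit Mittag--Leffler representation and the computations already carried out in \cite{Oparnica02}.
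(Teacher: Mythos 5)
Your proposal follows essentially the same route as the paper's proof: split the symbol as $\frac{1+s^\alpha}{1+\theta s^\alpha}=1+\frac{(1-\theta)s^\alpha}{1+\theta s^\alpha}$, identify the inverse Laplace transform of the remainder as a Mittag--Leffler-type kernel, and obtain continuity on $E_H$ from Young's inequality ($L^1\ast L^2\subset L^2$ in the $t$-variable, for a.e.\ $x$). Your caution about Lemma \ref{lemma:L_1} is also well placed: the kernel behaves like $t^{\alpha-1}$ near $t=0$, so it lies in $L^2_{loc}$ only for $\alpha>1/2$, and the paper's own proof indeed uses the $L^1$-based Young estimate rather than Lemma \ref{lemma:L_1}.

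The place where you must commit to an answer is the delta term, and here you have actually uncovered a slip in the paper itself. Since $\frac{(1-\theta)s^\alpha}{1+\theta s^\alpha}\to\frac{1-\theta}{\theta}\neq 0$ as $s\to\infty$, its inverse Laplace transform necessarily contains the singular part $\frac{1-\theta}{\theta}\,\delta$; it cannot be ``avoided'', as your last suggestion hopes. Writing $e_\alpha'$ for the classical derivative (the one in $L^1_{loc}$, whose transform is $\cL e_\alpha'=\frac{-1/\theta}{s^\alpha+1/\theta}$ because $e_\alpha(0)=1$), the correct identity is
$$
\cL^{-1}\left(\frac{1+s^\alpha}{1+\theta s^\alpha}\right)(t)
=\frac{1}{\theta}\,\delta(t)+\left(\frac{1}{\theta}-1\right)e_\alpha'\left(t,\frac{1}{\theta}\right),
$$
so that $L=\frac{1}{\theta}\Id+L_\alpha$ rather than $\Id+L_\alpha$. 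The paper's displayed computation obtains $\delta$ instead of $\frac{1}{\theta}\delta$ because it tacitly applies $\cL e_\alpha'=s\,\cL e_\alpha=\frac{s^\alpha}{s^\alpha+1/\theta}$, i.e.\ uses the distributional derivative of the causal extension (which contains a $\delta$, since $e_\alpha$ jumps from $0$ to $1$ at $t=0$) while simultaneously asserting $e_\alpha'\in L^1_{loc}$; those two readings are incompatible. None of this damages the substance: the operator is still a constant multiple of the identity plus convolution with an $L^1_{loc}$ kernel, and your Young-inequality bound $\|L_\alpha u\|_{E_H}\le\|l_\alpha\|_{L^1(0,T)}\|u\|_{E_H}$ gives continuity on $E_H$ verbatim. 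But your final write-up should state the decomposition with the constant $\frac{1}{\theta}$ (or, equivalently, absorb $\frac{1-\theta}{\theta}\,\delta$ into $l_\alpha$ and give up the claim $l_\alpha\in L^1_{loc}$), instead of leaving the normalization open.
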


 \begin{proof}
 Recall that for the Mittag-Leffler function $e_\alpha(t,\lambda)$,
 defined by
 $$
 e_\alpha(t,\lambda) =
 % E_\alpha(-\lambda t^{\alpha}) =
 \sum_{k=0}^\infty \frac{(-\lambda t^\alpha)^k}{\Gamma(\alpha k+1)},
 $$
 we have that
 $\LT (e_{\alpha}(t,\lambda))(s) =\frac{s^{\alpha-1}}{s^{\alpha} + \lambda}$,
 $e_{\alpha}\in C^\infty((0,\infty))\cap C([0,\infty))$ and
 $e'_{\alpha} \in C^\infty((0,\infty))\cap L^1_{loc}([0,\infty))$
 (cf.\ \cite{MainardiGorenflo2000}).
 Also,
 $$
 \ILT \left(\frac{1+s^{\alpha}}{1+\theta s^{\alpha}}\right)(t)=
 \ILT\left(
 1 + \frac{(1-\theta)s^\alpha}{\theta (s^\alpha + \frac{1}{\theta})}
 \right)(t)
 = \delta(t) + \left(\frac{1}{\theta}-1\right)
 e_\alpha'\left(t,\frac{1}{\theta}\right)
 =: \delta(t)+l_\alpha(t).
 $$

 Let $ u\in E_H$. Then
 \begin{equation}
 \ILT \left(\frac{1+ s^{\alpha}}{1+ \theta s^{\alpha}}\right)(\cdot)
 \ast_t u (x,\cdot)
 = u(x,\cdot) + \left(\frac{1}{\theta}-1\right) e_\alpha'
 \ast u(x,\cdot)
 \end{equation}
 is an element in $L^2(0,T)$ for almost all $x$
 (use Fubini's theorem, $e'_{\alpha} \in L^1(0,T)$ and
 $L^1\ast L^p\subset L^p$ (cf.\ \cite{Folland})).
 Extend this to a measurable function on $(0,1)\times(0,T)$,
 denoted by $Lu$.
 By Young's inequality we have
 \begin{align*}
 \|(Lu)(x,\cdot)\|_{L^2(0,T)}
 & \leq \|u(x,\cdot)\|_{L^2(0,T)}
 +|\frac{1}{\theta}-1|\|e_\alpha'\ast u(x,\cdot)\|_{L^2(0,T)} \\
 & \leq \|u(x,\cdot)\|_{L^2(0,T)} + |\frac{1}{\theta}-1|
 \|e'_\alpha\|_{L^1(0,T)} \|u(x,\cdot)\|_{L^2(0,T)},
 \end{align*}
 hence,
 \begin{equation} \label{Lbound}
 \|Lu\|_{E_H} \leq (1 + |\frac{1}{\theta}-1|
 \|e'_\alpha\|_{L^1(0,T)}) \|u\|_{E_H}.
 \end{equation}
 Thus, $Lu\in E_H$ and $L$ is continuous on $E_H$.
 \end{proof}

 We may write
 \beq \label{eq:opL}
 Lu := (\Id+L_\alpha)u
 = l\ast_t u
 = (\delta + l_\alpha)\ast_t u
 \quad \mbox{ with } \quad
 L_\alpha u := l_\alpha\ast_t u,
 \;
 l_\alpha:= (\frac{1}{\theta} - 1)
 e_\alpha' (t,\frac{1}{\theta}),
 \eeq
 and therefore the model system (\ref{eq:PDE})-(\ref{eq:FDE}) is
 equivalent to Equation (\ref{eq:IntegroPDE}).

%%%%%%%%%%%%%%%%%%%%%%%%%%%%%%%%%%%%%%%%%%%%%%%%%%%%%%%%%%%%%%%%%%
 \subsection{Weak solutions for $L^{\infty}$ coefficients}
 \label{ssec:weak sol}
%%%%%%%%%%%%%%%%%%%%%%%%%%%%%%%%%%%%%%%%%%%%%%%%%%%%%%%%%%%%%%%%%%

 Now we are in a position to apply the abstract results from the
 previous section to the original problem.

 \begin{theorem} \label{th:weak sol}
 Let $b$ and $c$ be as in (\ref{HypothesisOn_c_and_b}) and
 (\ref{addHypothesisOn_c}). Let the bilinear form
 $a(t,\cdot,\cdot)$, $t\in [0,T]$, be defined by
 (\ref{sesforma}) and (\ref{sesform2}), and the operator $L$ as in
 (\ref{eq:opL}). Let $f_1\in H_0^2((0,1))$, $f_2\in L^2(0,1)$ and $h\in L^2((0,1)\times(0,T))$. 
 Then there exists a unique $u\in L^2(0,T;H_0^2(0,1))$ satisfying
 \beq \label{sol_reg}
 u' = \frac{du}{dt} \in L^2(0,T;H_0^2(0,1)),
 \qquad
 u'' = \frac{d^2 u}{dt^2}\in L^2(0,T;H^{-2}(0,1)),
 \eeq
 and solving the initial value problem
 \begin{align}
 & \dis{u''(t)}{v} + a(t,u(t),v) + \dis{Lu(t)}{v} = 0,
 \qquad \forall\, v\in H_0^2((0,1)),\, t \in (0,T), \label{vf}\\
 & u(0)=f_1, \qquad u'(0) = f_2. \label{vf_ini}
 \end{align}
 (Note that, as in the abstract version, since (\ref{sol_reg})
 implies $u \in C([0,T],H^2_0((0,1)))$ and  $u' \in C([0,T],H^{-2}((0,1)))$ it makes sense to
 evaluate $u(0)\in H^2_0((0,1))$ and
 $u'(0) \in H^{-2}((0,1))$ and (\ref{vf_ini}) claims that
 these equal $f_1$ and $f_2$, respectively.)
 \end{theorem}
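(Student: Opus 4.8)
The plan is to recognize that Theorem \ref{th:weak sol} is simply the concrete instantiation of the abstract machinery already assembled in Section \ref{sec:abstract}, applied to the specific Gelfand triple $V = H^2_0((0,1)) \subset H = L^2(0,1) \subset V' = H^{-2}((0,1))$. The strategy is therefore to verify that every hypothesis of Lemma \ref{lemma:m-a} and Proposition \ref{prop:EnergyEstimates} is met in this setting, and then quote the conclusion. First I would note that the bilinear forms $a_0$ and $a_1$ defined in (\ref{sesforma}) satisfy Assumption \ref{Ass1}: this is precisely the content of the remark following (\ref{sesform2}), which cites \cite[proof of Th.\ 2.2]{HoermannOparnica09}, relying on the bounds (\ref{HypothesisOn_c_and_b}) and the uniform ellipticity (\ref{addHypothesisOn_c}). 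In particular, $0 < c_0 \le c(x) \le c_1$ gives the $V$-coercivity up to a lower-order $H$-term required by (\ref{eq:coercivity}), continuity of $t \mapsto b(\cdot,t)$ in $L^\infty$ supplies (iv)--(v), and $a_0$ is time-independent hence trivially $C^1$ in $t$ and symmetric.

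Next I would check the operator $L$. By the lemma preceding (\ref{eq:opL}), $L = \Id + L_\alpha$ is continuous on $E_H$ with the explicit bound (\ref{Lbound}); moreover $L$ is of convolution type with kernel $l = \delta + l_\alpha$, where $l_\alpha = (\frac{1}{\theta}-1)e_\alpha'(t,\frac{1}{\theta}) \in L^1_{loc}([0,\infty))$ supported in $[0,\infty)$. The causality and the estimate (\ref{eq:L-estimate}) then follow: the identity part contributes the constant $1$, and for the convolution part I would invoke Lemma \ref{lemma:L_1} (strictly, a mild variant, since $l_\alpha \in L^1_{loc}$ rather than $L^2_{loc}$, but Young's inequality as used in (\ref{Lbound}) already furnishes the required $E_H$-bound with a constant independent of $T_1$ by causality). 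Thus $L : E_H \to E_H$ satisfies (\ref{eq:L-estimate}).

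With all hypotheses verified, I would simply apply Lemma \ref{lemma:m-a} with $f_1 \in V = H^2_0((0,1))$, $f_2 \in H = L^2(0,1)$, and right-hand side $h \in E_H$ (here the equation (\ref{vf}) is homogeneous, so one takes $h \equiv 0$, or more generally the given $h$) to obtain existence of $u \in E_V = L^2(0,T;H^2_0((0,1)))$ with the regularity (\ref{sol_reg}) and continuity $u \in C([0,T];V)$, $u' \in C([0,T];H)$, solving (\ref{vf})--(\ref{vf_ini}). Uniqueness follows from the energy estimate (\ref{eq:EE}) of Proposition \ref{prop:EnergyEstimates}, exactly as in the abstract uniqueness theorem: any difference of two solutions solves the homogeneous problem with zero initial data and hence vanishes. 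The evaluation of $u(0)$ and $u'(0)$ makes sense because (\ref{sol_reg}) forces the stated continuity, as recorded in the parenthetical remark of the statement.

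I do not expect a genuine obstacle here; the theorem is a translation exercise. The one point deserving care is the minor mismatch between the regularity of the kernel $l_\alpha$ ($L^1_{loc}$) and the hypothesis of Lemma \ref{lemma:L_1} ($L^2_{loc}$); the cleanest route is to bypass Lemma \ref{lemma:L_1} and use directly the continuity bound (\ref{Lbound}) established for $L$ on $E_H$, checking that the same Young-type estimate, restricted to $[0,T_1]$ and exploiting $\supp l_\alpha \subset [0,\infty)$, yields a constant $C_L$ independent of $T_1$ as required by (\ref{eq:L-estimate}).
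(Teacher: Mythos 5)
Your proposal is correct and takes essentially the same route as the paper: the paper's proof likewise reduces everything to Lemma \ref{lemma:m-a} (plus the abstract uniqueness theorem), verifies Assumption \ref{Ass1} with the explicit constants (\ref{eq:constants}) imported from \cite[proof of Th.\ 2.2]{HoermannOparnica09}, and verifies (\ref{eq:L-estimate}) not via Lemma \ref{lemma:L_1} but directly from the Young-inequality bound (\ref{Lbound}), taking $C_L = 1 + |\frac{1}{\theta}-1|\,\|e_\alpha'\|_{L^1(0,T)}$ — precisely the ``cleanest route'' you identify. Your added care about the $T_1$-independence of $C_L$ (using causality of the kernel $l_\alpha$) addresses a point the paper passes over silently, but it is the same underlying argument.
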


 \begin{proof}
 We may apply Lemma \ref{lemma:m-a} because the bilinear form $a$
 and the operator $L$ satisfy Assumption \ref{Ass1} and condition
 (\ref{eq:L-estimate}).
 The latter is true according to (\ref{Lbound}) with
 $C_L=(1 + |\frac{1}{\theta}-1| \|e'_\alpha\|_{L^1(0,T)})
 =1+\|l_\alpha\|_{L^2(0,T)}$.
 As noted earlier, the bilinear forms $a$, $a_0$ and $a_1$ are as
 in \cite[(20) and (21)]{HoermannOparnica09}.
 Moreover, it follows as in the proof of \cite[Theorem\ 2.2]{HoermannOparnica09} that
 $a$ satisfies Assumption \ref{Ass1} with
 \beq \label{eq:constants}
 C_0:=\|c\|_{L^\infty(0,1)},
 \quad
 C_0':=0,
 \quad
 C_1:=\|b\|_{L^\infty((0,1)\times(0,T))},
 \quad
 \mu:=\frac{c_0}{2},
 \quad
 \lambda:=C_{1/2}\cdot c_0,
 \eeq
 where $C_{1/2}$ is corresoponding constant form Ehrling's lemma. 
\end{proof}

 We briefly recall two facts about the solution $u$ obtained in
 Theorem \ref{th:weak sol} (as noted similarly in
 \cite[Section 2]{HoermannOparnica09}):

 (i) Since $u(.,t) \in H^2_0((0,1))$ for all
 $t \in [0,T]$ and $H^2_0((0,1))$ is continuously embedded in
 $\{v \in C^1([0,1]): v(0,t) = v(1,t) = 0,
 \pd_x v(0,t) = \pd_x v(1,t) = 0\}$  (\cite[Corollary 6.2]{Wloka})
 the solution $u$ automatically satisfies the boundary conditions.

 (ii) The properties in  (\ref{sol_reg}) imply that $u$ belongs to
 $C^1([0,T],H^{-2}((0,1))) \cap L^2((0,T)\times(0,1))$, which is
 a subspace of  $\cD'((0,1)\times(0,T))$. Thus in case of smooth
 coefficients $b$ and $c$ we obtain a distributional solution to the
 ``integro-differential'' equation
 $$
 \pd_t^2 u + \pd^2_x(c \,\pd^2_x)u + b \,\pd_x^2 u + l *_t u = h.
 $$

%%%%%%%%%%%%%%%%%%%%%%%%%%%%%%%%%%%%%%%%%%%%%%%%%%%%%%%%%%%%%%%%%%
 \subsection{Colombeau generalized solutions}
 \label{ssec:Colombeau sol}
%%%%%%%%%%%%%%%%%%%%%%%%%%%%%%%%%%%%%%%%%%%%%%%%%%%%%%%%%%%%%%%%%%

 We will prove unique solvability  of Equation (\ref{eq:IntegroPDE})
 (or equivalently, of Equations (\ref{eq:PDE})-(\ref{eq:FDE}))
 with (IC) and (BC) for $u \in \cG_{H^{\infty}(X_T)}$ when
 $b,c, f_1, f_2, g$ and $h$ are Colombeau generalized functions, where $X_T:=(0,1)\times (0,T)$.

 In more detail, we find a unique solution
 $u\in\cG_{H^{\infty}(X_T)}$ to the equation
 $$
 \pd^2_tu + Q(t,x,\pd_x)u + Lu = h,
 \qquad \mbox{ on } X_T
 $$
 with initial conditions
 $$
 u|_{t=0} = f_1\in \cG_{H^{\infty}((0,1))},
 \qquad \pd_t u|_{t=0} = f_2\in \cG_{H^{\infty}((0,1))}
 $$
 and boundary conditions
 $$
 u|_{x=0} =u|_{x=1}=0,
 \qquad \pd_x u|_{x=0} = \pd_x u|_{x=1}=0.
 $$
 Here $Q$ is a partial differential operator on $\cG_{H^{\infty}(X_T)}$ with generalized
 functions as coefficients, defined by
 its action on representatives in the form
 $$
 (u_\eps)_\eps \mapsto \left(
 \pd_x^2(c_\eps(x)\pd_x^2 u_\eps))
 + b_\eps(x,t)\pd_x^2 (u_\eps)
 \right)_\eps =: (Q_\eps u_\eps)_\eps.
 $$
 Furthermore, the operator $L$ corresponds to convolution on
 the level of representatives with regularizations of $l$
 as given in Lemma \ref{lemma:reg L}:
 $$
 (u_\eps)_\eps \mapsto \left(
 l_\eps\ast_t u_\eps (t)
 \right)_\eps =: (L_\eps u_\eps)_\eps,
 $$
 where $l_\eps=l\ast \rho_\eps$, with $\rho_\eps$ introduced in
 Lemma \ref{lemma:reg L}.

 \begin{lemma}
 (i)\ If $l\in L^2_{loc}(\R)$ and $l$ is $\cC^\infty$ in
 $(0,\infty)$ then $L$ is a continuous operator on $H^{\infty}(X_T)$.
 Thus $(u_\eps)_\eps\mapsto (Lu_\eps)_\eps$ defines a linear map
 on $\cG_{H^\infty(X_T)}$.

 (ii)\ If $l\in L^1_{loc}(\R)$ then $\forall\,\eps\in(0,1]$ the
 operator $L_\eps$ is continuous on $H^{\infty}(X_T)$ and
 $(u_\eps)_\eps\mapsto (Lu_\eps)_\eps$ defines a linear map
 on $\cG_{H^\infty(X_T)}$.
 \end{lemma}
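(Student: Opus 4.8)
The plan is to show that $L$ (respectively each $L_\eps$) maps $H^\infty(X_T)$ into itself with seminorm bounds, and then to read off that the induced family map respects both moderateness and nullity. For part (i), the starting observation is that spatial derivatives pass directly through the time convolution, $\pd_x^j(Lu)=L(\pd_x^j u)$, so by Lemma \ref{lemma:L_1} (applicable since $l\in L^2_{loc}(\R)$) we get $\|\pd_x^j(Lu)\|_{L^2(X_T)}\le C_L\,\|\pd_x^j u\|_{L^2(X_T)}$ with $C_L=\|l\|_{L^2(0,T)}\,T$. It then suffices to control time derivatives of $Lu(x,t)=\int_0^t l(r)\,u(x,t-r)\,dr$. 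Differentiating this variable-endpoint integral by Leibniz' rule and iterating gives
\beq
\pd_t^k(Lu)=L(\pd_t^k u)+\sum_{i=0}^{k-1} l^{(i)}(t)\,(\pd_t^{\,k-1-i}u)(x,0),
\eeq
where the interior evaluations $l^{(i)}(t)$ for $t\in(0,T)$ are meaningful by the smoothness of $l$ on $(0,\infty)$. I would bound the leading term $L(\pd_t^k u)$ again by Lemma \ref{lemma:L_1}, and factorise the $L^2(X_T)$-norm of each boundary term as $\|l^{(i)}\|_{L^2(0,T)}\cdot\|(\pd_t^{\,k-1-i}u)(\cdot,0)\|_{L^2(0,1)}$; the trace factor is finite (in fact bounded) because $H^\infty(X_T)\subset C^\infty(\overline{X_T})$, so restrictions to $t=0$ are smooth in $x$. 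Summing over all mixed derivatives yields, for each seminorm order $m$, an estimate $\|Lu\|_{(m)}\le C_m\,\|u\|_{(m)}$ with $C_m$ independent of $u$, i.e.\ continuity of $L$ on $H^\infty(X_T)$; since $L$ is $\eps$-independent, the induced map on representatives is at once seen to send moderate families to moderate families and (using that traces and all derivatives of null families are again null, cf.\ \cite{garetto_duals}) null families to null families, so it descends to $\cG_{H^\infty(X_T)}$.

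For part (ii) the decisive simplification is that $l_\eps=l\ast\rho_\eps$ with $\rho_\eps\in\cD(\R)$ is $C^\infty$ on all of $\R$ (not merely on $(0,\infty)$), so for each fixed $\eps$ every kernel norm $\|l_\eps^{(i)}\|_{L^2(0,T)}$ is finite and the argument of (i) applies verbatim to $L_\eps$, giving continuity on $H^\infty(X_T)$. To see that $(u_\eps)_\eps\mapsto(L_\eps u_\eps)_\eps$ descends to $\cG_{H^\infty(X_T)}$ I would track the $\eps$-dependence of the continuity constants: writing $l_\eps^{(i)}=l\ast\rho_\eps^{(i)}$ with $\rho_\eps^{(i)}(t)=\gamma_\eps^{\,i+1}\rho^{(i)}(\gamma_\eps t)$ and arguing as in Lemma \ref{lemma:reg L}, one obtains $\|l_\eps^{(i)}\|_{L^2(0,T)}\le \|l\|_{L^1(K+B_1(0))}\,\gamma_\eps^{\,i+1/2}\,\|\rho^{(i)}\|_{L^2}$, a power of $\gamma_\eps$. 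Since $\gamma_\eps$ grows at most polynomially in $1/\eps$, these operator constants are moderate, and the linear estimates of (i) then show that $(L_\eps u_\eps)_\eps$ is moderate whenever $(u_\eps)_\eps$ is and null whenever $(u_\eps)_\eps$ is; by linearity the map is well defined on the quotient.

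The delicate point, and the reason the statement is split, is precisely the time differentiation in (i): the one-sided convolution with upper limit $t$ unavoidably produces boundary terms carrying the derivatives $l^{(i)}$ of the kernel, so the estimate genuinely requires $\|l^{(i)}\|_{L^2(0,T)}<\infty$ for all $i$, and it is here that the interior smoothness of $l$ on $(0,\infty)$ must be leaned on to keep the boundary contributions square integrable. This obstruction disappears after regularisation, since $l_\eps$ is smooth up to $t=0$; that is exactly why (ii) — the version actually used for the Colombeau solution in Section \ref{ssec:Colombeau sol} — is the robust case, at the modest cost of having to monitor the polynomial $\gamma_\eps$-growth of the constants.
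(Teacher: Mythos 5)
Your proposal is correct and follows essentially the same route as the paper: spatial derivatives commute with the time convolution and are handled by Lemma \ref{lemma:L_1}, time derivatives are expanded by the Leibniz rule into $L(\pd_t^k u)$ plus boundary terms $l^{(i)}(t)\,(\pd_t^{k-1-i}u)(\cdot,0)$ estimated via kernel norms times traces, and in (ii) the $\gamma_\eps$-power growth of the regularized kernel norms yields moderateness. If anything, you are slightly more explicit than the paper in (ii), where you track $\|l_\eps^{(i)}\|_{L^2(0,T)} \le \|l\|_{L^1}\,\gamma_\eps^{\,i+1/2}\,\|\rho^{(i)}\|_{L^2}$ for the higher kernel derivatives rather than only the $i=0$ factor.
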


 \begin{proof}
 (i)\ From Lemma \ref{lemma:L_1} with $H=L^2(0,1)$ we have that
 $L$ is continuous on $L^2(X_T)$ with operator norm $\|L\|_{op}\leq
 T\cdot \|l\|_{L^2(0,T)}$.

 Let $u\in H^\infty(X_T)$ and $Lu(x,t)=\int_0^t l(s)u(x,t-s)\,ds$.
 We have to show that all derivatives of $Lu$ with respect to both
 $x$ and $t$ are in $L^2(X_T)$.
 \begin{itemize}
 \item $\pd_x^l Lu(x,t)=\int_0^t l(s) \pd_x^l u(x,t-s)\,ds$, and
 hence, $\|\pd_x^l Lu\|_{L^2(X_T)}\leq T\cdot \|l\|_{L^2(0,T)}
 \|\pd_x^l u\|_{L^2(X_T)}$.
 \item $\pd_t^k \pd_x^l Lu=\pd_t^k L(\pd_x^l u)$, and since the
 estimates for $L(\pd_x^l u)$ are known it suffices to consider
 only terms $\pd_t^k Lu$. For the first order derivative we have
 $$
 \pd_t Lu(x,t) = l(t)u(x,0) + \int_0^t l(s)\pd_t u(x,t-s)\, ds\\
 $$
 and therefore
 \beas
 \|\pd_t Lu\|_{L^2(X_T)} &\leq&
 \|l\|_{L^2(0,T)} \|u(\cdot,0)\|_{L^2(0,1)}
 + T\cdot \|l\|_{L^2(0,T)} \|\pd_t u\|_{L^2(X_T)} \\
 &\leq& \|l\|_{L^2(0,T)} (\|u\|_{H^m(X_T)}
 + T\cdot \|u\|_{H^1(X_T)}),
 \eeas
 where we have used the fact that $\Tr:H^\infty(X_T)\to
 H^\infty((0,1))$, $u\mapsto u(\cdot,0)$ is continuous, and more
 precisely, $\Tr:H^m(X_T)\to H^{m-1}((0,1))$ with estimates
 $\|\pd_x^l \pd_t^k u(\cdot,0)\|_{L^2(0,1)}\leq \|u\|_{H^m(X_T)}$,
 $m=m(k,l)$.

 Higher order derivatives involve terms $l^{(r)}(t) \pd_t^p u(x,0),
 \ldots, \int_0^t l(s) \pd_t^p u(x,t-s)\, ds$, which can be
 estimated as above.
 \end{itemize}

 (ii)\  From Lemma \ref{lemma:reg L} it follows that $l_\eps\in
 L^2_{loc}(\R)$, and $\|l_\eps\|_{L^2(0,T)}\leq\gamma_\eps^\frac12
 \cdot \|l\|_{L^1(0,T)} \|\rho\|_{L^2(0,T)}$. From Lemma
 \ref{lemma:L_1} we know that $L_\eps$ is continuous $X_T\to X_T$,
 with $\|L_\eps\|_{op}\leq T\cdot \|l_\eps\|_{L^2(0,T)} \leq
 T\cdot \gamma_\eps^\frac12 \cdot \|l\|_{L^1(0,T)}
 \|\rho\|_{L^2(0,T)}$, which is moderate. We can now proceed as in
 (i) to produce estimates of $\|L_\eps u_\eps\|_{H^r(X_T)}$,
 $\forall\,r\in\N$, always replacing $\|l\|_{L^2(0,T)}$ by
 $\gamma_\eps^\frac12 \cdot \|l\|_{L^1(0,T)} \|\rho\|_{L^2(0,T)}$
 factors. Since $\gamma_\eps\leq \eps^{-N}$ it follows that
 $(L_\eps u_\eps)_\eps \in \cE_{H^\infty(X_T)}$.
 \end{proof}

 \begin{remark} The function $l$ as defined in (\ref{eq:opL})
 belongs to $L^2_{loc}(\R)$, if $\alpha>1/2$, and to
 $L^1_{loc}(\R)$, if $\alpha\leq 1/2$  (which follows from
 the explicit form of $e_\alpha'(t,\frac{1}{\theta})$).
 This means that in case $\alpha > 1/2$ we could in fact
 define the operator $L$ without regularization of $l$.
%We shall also write $l$ for the class $[(l_\eps)_\eps]$.
 \end{remark}

 As in the classical case we also have to impose a
 condition to ensure compatibility of initial with boundary
 values, namely (as equation in generalized numbers)
 \beq \label{compatibility}
 f_1(0) = f_1(1) = 0.
 \eeq
 Note that if $f_1 \in \cG_{H^\infty((0,1))}$ satisfies
 (\ref{compatibility}) then there is some representative
 $(f_{1,\eps})_\eps$ of $f_1$ with the property $f_{1,\eps} \in
 H^2_0((0,1))$ for all $\eps \in\, (0,1)$ (cf.\ the
 discussion right below Equation (28) in \cite{HoermannOparnica09}).

 Motivated by condition (\ref{addHypothesisOn_c}) above
 on the bending stiffness we assume the following about $c$:
 There exist real constants $c_1 > c_0 > 0$ such that
 $c\in \cG_{H^{\infty}(0,1))}$
 possesses a representative $(c_{\eps})_\eps$ satisfying
 \beq \label{eq:c-1-2}
 0 < c_0 \leq c_{\eps}(x) \leq c_1
 \qquad \forall\, x\in (0,1), \forall\, \eps \in\, (0,1].
 \eeq
 (Hence any other representative of $c$ has upper and
 lower bounds of the same type.)

 As in many evolution-type problems with Colombeau generalized
 functions we also need the standard assumption that $b$ is of
 $L^\infty$-log-type (similar to \cite{MO-89}), which means that
 for some (hence any) representative $(b_\eps)_\eps$ of
 $b$ there exist $N\in\N$ and $\eps_0 \in (0,1]$ such that
 \beq \label{log_type}
 \|b_\eps\|_{L^\infty(X_T)}
 \leq N\cdot \log(\frac{1}{\eps}),
 \qquad 0 < \eps \leq \eps_0.
 \eeq
 It has been noted already in \cite[Proposition 1.5]{MO-89}
 that log-type regularizations of distributions are obtained
 in a straight-forward way by convolution with logarithmically
 scaled mollifiers.

 \begin{theorem} \label{th:main}
 Let $b\in \cG_{H^{\infty}(X_T)}$ be of $L^\infty$-log-type
 and $c\in \cG_{H^{\infty}(0,1))}$ satisfy (\ref{eq:c-1-2}).
 Let $\gamma_\eps=O(\log \frac{1}{\eps})$.
 For any $f_{1} \in \cG_{H^{\infty}((0,1))}$ satisfying
 (\ref{compatibility}), $f_{2}\in \cG_{H^{\infty}((0,1))}$,
 $h\in \cG_{H^{\infty}(X_T)}$ and
 $l\in\cG_{H^{\infty}((0,1))}$,
 there is a unique solution $u \in \cG_{H^{\infty}(X_T)}$ to
 the initial-boundary value problem
 \begin{align*}
 & \pd^2_tu + Q(t,x,\pd_x)u + Lu = h, \\
 & u|_{t=0} = f_1, \quad \pd_t u|_{t=0} = f_2, \\
 & u|_{x=0} =u|_{x=1}=0, \quad
 \pd_x u|_{x=0} = \pd_x u|_{x=1}=0.
 \end{align*}
 \end{theorem}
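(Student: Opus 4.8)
The plan is to solve the problem at the level of representatives, verify that the resulting family of classical solutions is moderate, and then deduce well-definedness and uniqueness in $\cG_{H^\infty(X_T)}$ from the energy estimate (\ref{eq:EE}). First I would fix representatives $(b_\eps)_\eps$, $(c_\eps)_\eps$, $(f_{1,\eps})_\eps$, $(f_{2,\eps})_\eps$, $(h_\eps)_\eps$ and $(l_\eps)_\eps=(l\ast\rho_\eps)_\eps$ of the data, choosing $(c_\eps)_\eps$ so that the ellipticity bounds (\ref{eq:c-1-2}) hold and, using the compatibility condition (\ref{compatibility}), choosing $(f_{1,\eps})_\eps$ with $f_{1,\eps}\in H^2_0((0,1))$ for every $\eps$. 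For each fixed $\eps$ the coefficients are smooth, hence in particular in $L^\infty$, so Theorem \ref{th:weak sol} (via Lemma \ref{lemma:m-a}) yields a unique weak solution $u_\eps$ of the $\eps$-problem with $u_\eps\in L^2(0,T;H^2_0)$, $u_\eps'\in L^2(0,T;H^2_0)$ and $u_\eps''\in L^2(0,T;H^{-2})$; smoothness of all data then upgrades $u_\eps$ to $H^\infty(X_T)$ by the usual regularity bootstrap. Since $f_{1,\eps}\in H^2_0$, each $u_\eps$ satisfies the boundary conditions on the representative level, and the initial conditions $u_\eps(\cdot,0)=f_{1,\eps}$, $\pd_t u_\eps(\cdot,0)=f_{2,\eps}$ hold by construction.

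The central step is to show $(u_\eps)_\eps\in\cE_{M,H^\infty(X_T)}$. Here I would apply the energy estimate (\ref{eq:EE}) with the explicit constants (\ref{eq:constants}): the quantities $C_0\le c_1$, $C_0'=0$, $\mu=c_0/2$ and $\lambda=C_{1/2}c_0$ are uniformly bounded in $\eps$, so $D_T=O(1)$, while $C_1=\|b_\eps\|_{L^\infty(X_T)}=O(\log\frac1\eps)$ by the $L^\infty$-log-type assumption (\ref{log_type}) and $C_L=O(\|l_\eps\|_{L^2(0,T)})=O(\gamma_\eps^{1/2})=O((\log\frac1\eps)^{1/2})$ by Lemma \ref{lemma:reg L} together with $\gamma_\eps=O(\log\frac1\eps)$. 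Consequently $F_T=O(\log\frac1\eps)$ and the exponential factor $e^{T\cdot F_T}$ grows only polynomially in $\eps^{-1}$; combined with the moderateness of $\|f_{1,\eps}\|_V$, $\|f_{2,\eps}\|_H$ and $\|h_\eps\|_{E_H}$, estimate (\ref{eq:EE}) gives $\sup_{t}(\|u_\eps(t)\|_{H^2_0}^2+\|u_\eps'(t)\|_H^2)=O(\eps^{-p})$ for some $p$. To control all higher derivatives I would differentiate the equation $\pd_t^2 u_\eps+Q_\eps u_\eps+L_\eps u_\eps=h_\eps$ repeatedly in $x$ and $t$ and apply the same energy estimate to each differentiated problem; each differentiation of $b_\eps$ (and of $l_\eps$, where the scale $\gamma_\eps$ enters) contributes only further logarithmic, hence moderate, factors, so the whole family $(u_\eps)_\eps$ satisfies $L^2(X_T)$-bounds of order $O(\eps^{-p})$ for every derivative. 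Keeping track of the logarithmic growth through this bootstrap is the main obstacle, and is exactly what forces the hypotheses of $L^\infty$-log-type on $b$ and $\gamma_\eps=O(\log\frac1\eps)$.

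Finally I would establish independence of the representative and uniqueness simultaneously from (\ref{eq:EE}). If the data are altered by null families, the difference $w_\eps$ of the two solution families solves an equation whose right-hand side and initial values are products of null families with the moderate families just estimated --- of the form $n_\eps^b\,\pd_x^2 u_\eps$, $\pd_x^2(n_\eps^c\,\pd_x^2 u_\eps)$, $l_{n,\eps}\ast_t u_\eps$ and null initial data --- hence themselves null; applying (\ref{eq:EE}) to $w_\eps$ (and its differentiated versions) shows $(w_\eps)_\eps\in\cN_{H^\infty(X_T)}$, so $[(u_\eps)_\eps]$ is a well-defined element of $\cG_{H^\infty(X_T)}$ independent of all choices. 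Uniqueness follows in the same way: two generalized solutions have representatives satisfying the equation and initial conditions up to null families, so their difference solves the homogeneous problem modulo null right-hand side and null data, and (\ref{eq:EE}) again forces the difference to be null. The boundary and initial conditions pass to the Colombeau level by the representative-level facts recorded in Section \ref{ssec:Colombeau}, completing the proof.
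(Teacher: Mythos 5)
Your proposal follows the same skeleton as the paper's proof: solve for each fixed $\eps$ via Theorem \ref{th:weak sol}, invoke Proposition \ref{prop:EnergyEstimates} with the explicit constants (\ref{eq:constants}) so that the log-type hypothesis (\ref{log_type}) and $\gamma_\eps=O(\log\frac1\eps)$ keep the exponential factor $e^{T F_T^\eps}$ polynomially bounded, and get well-definedness and uniqueness by applying the same estimate to differences with null data. The first-order moderateness bound (your version of (\ref{EEeps})) and the uniqueness argument match the paper. The genuine gap is in the step you compress into ``differentiate the equation repeatedly in $x$ and $t$ and apply the same energy estimate to each differentiated problem'' --- this is where the technical core of the proof lies, and as stated it fails for the $x$-derivatives. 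Writing an equation for $\pd_x u_\eps$ produces the commutator term $\pd_x^2\left((\pd_x c_\eps)\,\pd_x^2 u_\eps\right)$, which is again of fourth order in $u_\eps$: it is not in $E_H$, nor controlled by the estimates available at that stage, so it cannot be treated as a source term in Proposition \ref{prop:EnergyEstimates}; moreover $\pd_x u_\eps(t)$ no longer lies in $H^2_0((0,1))$ (its derivative $\pd_x^2 u_\eps$ need not vanish at the endpoints), so the variational setting collapses. The paper never differentiates in $x$: higher $x$-derivatives are obtained by an elliptic-type bootstrap directly from the equation, setting $k_\eps := \pd_x^2(c_\eps\,\pd_x^2 u_\eps) = h_\eps - b_\eps\,\pd_x^2 u_\eps - \pd_t^2 u_\eps - L_\eps u_\eps$, showing $\|\pd_t^l k_\eps\|_{L^2(X_T)}$ is moderate, and then using the lower bound $c_\eps\geq c_0$ to solve for $\pd_x^2 u_\eps$ and differentiate that relation (Step 2 of the proof, following \cite{HoermannOparnica09}).

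Two further points. First, even for the $t$-derivatives, which the paper does treat by energy estimates for the differentiated problem, you miss that $\pd_t u_\eps(\cdot,0)=f_{2,\eps}$ need not satisfy the boundary conditions (no compatibility is assumed on $f_2$), so $\pd_t u_\eps$ is not an $H^2_0$-valued solution; the paper must switch the Gelfand triple to $V=H^2((0,1))$, $H=L^2(0,1)$ and re-apply Lemma \ref{lemma:m-a} and the energy estimate in that setting, the saving grace being that the constants are unchanged. Second, your claim that each differentiation of $b_\eps$ ``contributes only further logarithmic factors'' is false: assumption (\ref{log_type}) bounds only $b_\eps$ itself, while its derivatives are merely moderate. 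The argument survives only because of its structure: in the differentiated problems the terms $(\pd_t b_\eps)\pd_x^2 u_\eps$ and $l_\eps(t)f_{1,\eps}$ enter the source term, where moderate factors are harmless, whereas the exponent $F_T^\eps$ continues to involve only $\|b_\eps\|_{L^\infty}$ and $C_{L,\eps}$, which are the quantities that must stay logarithmic. Any scheme of repeated differentiation has to be organized so that this remains true; your sketch does not verify it, and the paper's two-step structure (energy estimates only in $t$, equation-based bootstrap in $x$) is exactly what guarantees it.
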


 \begin{proof} Thanks to the preparations a considerable part of
 the proof may be adapted from the corresponding proof in
 \cite[Theorem 3.1]{HoermannOparnica09}.
 Therefore we give details only for the first part and sketch
 the procedure from there on.

 {\bf Existence:} \enspace \enspace
 We choose representatives $(b_\eps)_\eps$ of $b$ and
 $(c_{\eps})_\eps$ of $c$ satisfying (\ref{eq:c-1-2})
 and (\ref{log_type}). Further let
 $(f_{1\eps})_\eps$, $(f_{2\eps})_\eps$, $(l_{\eps})_\eps$,
 and $(h_{\eps})_\eps$ be
 representatives of $f_1$,$f_2$, $l$, and $g$, respectively,
 where  we may assume $f_{1,\eps} \in H^2_0((0,1))$ for all
 $\eps \in \,(0,1)$ (cf.\ (\ref{compatibility})).

 For every $\eps\in (0,1]$  Theorem
 \ref{th:weak sol}
 provides us with a unique
 solution $u_{\eps}\in H^1((0,T),H^2_0((0,1))) \cap
 H^2((0,T),H^{-2}((0,1)))$ to
 \begin{align}
 P_{\eps}u_{\eps} &: = \pd^2_t u_{\eps} +
 Q_{\eps}(t,x,\pd_x)u_{\eps} + L_\eps u_\eps
 = h_{\eps}
 \qquad \mbox{ on } X_T, \label{Peps}\\
 & u_{\eps}|_{t=0} = f_{1\eps},
 \qquad \pd_tu_{\eps}|_{t=0} = f_{2\eps}. \nonumber
 \end{align}
 In particular, we have $u_{\eps}\in C^1([0,T],H^{-2}((0,1))) \cap
 C([0,T],H^2_0((0,1)))$.
% with
%  $\pd_t u_{\eps}\in L^2(0,T;H^2_0((0,1)))$ and
% $\pd^2_t u_{\eps}\in L^2(0,T;H^{-2}((0,1)))$.

 Proposition \ref{prop:EnergyEstimates} implies the energy estimate
 \begin{equation}\label{eq:EEeps1}
 \|u_{\eps}(t)\|_{H^2}^2 + \|u_{\eps}'(t)\|_{L^2}^2 \leq
 \big( D_T^\eps\, \|f_{1\eps}\|_{H^2}^2 + \|f_{2,\eps}\|_{L^2}^2
 + \int_0^t \|h_{\eps}(\tau)\|_{L^2}^2\,d\tau \big)
 \cdot \exp(t\, F_T^\eps),
 \end{equation}
 where with some $N$ we have as  $\eps \to 0$
 \begin{align}
 D_T^\eps &= (\|c_\eps\|_{L^{\infty}} +
 \lambda (1+T)) /\min(\mu,1)
 = O(\|c_\eps\|_{L^{\infty}}) = O(1)
 \label{const_D_eps} \\
 F_T^\eps &= \frac{\max \{\|b_\eps\|_{L^{\infty}}+C_{L,\eps},
 \|b_\eps\|_{L^{\infty}} +2+\lambda(1+T) \}}{\min(\mu,1)}
 = O(C_{L,\eps}+\|b_\eps\|_{L^{\infty}})
 = O(\log(\eps^{-N})), \label{const_F_eps}
 \end{align}
 since $\mu$ and $\lambda$ are independent of
 $\eps$, and $C_{L,\eps} = O(\log \frac{1}{\eps})$
 (cf.\ (\ref{eq:constants})).

 By moderateness of the initial data
 $f_{1\eps}$, $f_{2\eps}$ and of the right-hand side $h_{\eps}$
 the inequality (\ref{eq:EEeps1}) thus implies that there
 exists $M$ such that for  small $\eps > 0$ we have
 \begin{equation}\label{EEeps}
 \|u_{\eps}\|^2_{L^2(X_T)} +
 \|\pd_x u_{\eps}\|^2_{L^2(X_T)}
 + \| \pd_x^2 u_{\eps}\|^2_{L^2(X_T)}
 + \|\pd_t u_{\eps}\|^2_{L^2(X_T)} =
 O (\eps^{-M}),
 \qquad \eps\to 0.
 \end{equation}

 From here on the remaining chain of arguments proceeds along the lines of the
 proof in \cite[Theorem 3.1]{HoermannOparnica09}.
 We only indicate a few key points requiring certain adaptions.

% We have to show that the family $(u_{\eps})_{\eps}$
% belongs to $\cE_{H^{\infty}(X_T)}$. Then by construction its
% class $u$ in $\cG_{H^{\infty}(X_T)}$ defines a solution to
% the initial value problem. Furthermore, we need to prove
% the following properties:

The goal is to prove the following properties:
 \begin{itemize}
 \item[1.)]  For every $\eps \in (0,1]$ we have $u_{\eps}\in H^{\infty}(X_T)
 \subseteq C^{\infty}(\overline{X_T})$.
 \item[2.)] Moderateness, i.e.\ for all $l,k\in\N$ there is some
 $M\in\N$ such that for small $\eps > 0$
 \beq \tag{$T_{l,k}$} \label{T_lk}
 \|\pd^l_t\pd^k_xu_{\eps}\|_{L^2(X_T)} = O(\eps^{-M}).
 \eeq
 Note that (\ref{EEeps}) already yields (\ref{T_lk}) for
 $(l,k) \in \{ (0,0),(1,0),(0,1),(0,2)\}$.
 \end{itemize}
%\ \\[1mm]

 \noindent\emph{Proof of 1.)}  Differentiating (\ref{Peps}) (considered
 as an equation in $\cD'((0,1)\times(0,T))$) with respect to $t$
 we obtain
 $$
 P_{\eps}(\pd_tu_{\eps}) = \pd_th_{\eps}
 - \pd_t b_\eps(x,t)\pd_x^2u_{\eps}
 - l_\eps(t) f_{1,\eps}
 =: \tilde{h}_\eps,
 $$
 where we used $\pd_t(L_\eps u_\eps)=
 L_\eps (\pd_t u_\eps) + l_\eps(t)u_\eps(0)$.
 We have $\tilde{h}_\eps \in H^1((0,T),L^2(0,1))$
 since
 $\pd_t h_{\eps}\in H^{\infty}(X_T)$,
 $l_\eps\in H^{\infty}((0,T))$,
 $f_{1,\eps}\in H^{\infty}((0,1))$,
 $\pd_t b_\eps(x,t)\in H^{\infty}(X_T) \subset W^{\infty,\infty}(X_T)$
 and  $\pd^2_xu_{\eps}\in H^1((0,T),L^2(0,1))$.
 Furthermore, since $Q_\eps$ depends smoothly on $t$ as a
 differential operator in $x$ and $u_\eps(0) = f_{1,\eps} \in
 H^{\infty}((0,1))$ we have
 \begin{align*}
 (\pd_t u_{\eps})(\cdot,0) & = f_{2,\eps} =:
 \tilde{f}_{1,\eps} \in H^{\infty}((0,1)),\\
 (\pd_t(\pd_tu_{\eps}))(\cdot,0) & =
 h_{\eps}(\cdot,0) - Q_{\eps}(u_{\eps}(\cdot,0))-L_\eps u_\eps (\cdot,0)=
 h_{\eps}(\cdot,0) - (Q_{\eps}+L_\eps) f_{1,\eps} :=
 \tilde{f}_{2,\eps} \in H^{\infty}((0,1)).
 \end{align*}
 Hence $\pd_tu_{\eps}$ satisfies an initial value problem for the
 partial differential operator $P_\eps$ as in (\ref{Peps}) with
 initial data
 $\tilde{f}_{1,\eps}$, $\tilde{f}_{2,\eps}$ and right-hand side
 $\tilde{h}_{\eps}$ instead. However, this time we have to use $V = H^2((0,1))$
 (replacing $H^2_0((0,1))$) and $H = L^2(0,1)$ in the abstract setting,
 which still can serve to define a Gelfand triple
 $V\hookrightarrow H \hookrightarrow V'$ (cf.\ \cite[Theorem 17.4(b)]{Wloka})
 and thus allows for application of Lemma \ref{lemma:m-a} and
 the energy estimate (\ref{eq:EE}) (with precisely the same constants).

 Therefore we obtain
 $\pd_tu_{\eps}\in H^1([0,T],H^2((0,1)))$,
 i.e.\ $u_{\eps}\in H^2((0,T),H^2((0,1)))$ and from the
 variants of (\ref{eq:EEeps1}) (with exactly the same constants
 $D_T^\eps$ and $F_T^\eps$) and (\ref{EEeps})
 with $\pd_t u_\eps$ in place of $u_\eps$ that for some $M$
 we have
 \begin{equation}
 \| \pd_t u_{\eps}\|^2_{L^2(X_T)} +
 \|\pd_x \pd_t u_{\eps}\|^2_{L^2(X_T)}
 + \| \pd_x^2 \pd_t u_{\eps}\|^2_{L^2(X_T)}
 + \|\pd_t^2 u_{\eps}\|^2_{L^2(X_T)} =
 O (\eps^{-M})\quad (\eps\to 0).
 \end{equation}
 Thus we have proved (\ref{T_lk}) with $(l,k) = (2,0), (1,1), (1,2)$ in
 addition to those obtained from (\ref{EEeps}) directly.

 The remaining part of the proof of property 1.) requires the
 exact same kind of adaptions in the corresponding parts in
 Step 1 of the proof of \cite[Th.\ 3.1]{HoermannOparnica09}
 and we skip its details here. In particular, along the way
 one also obtains that
 \begin{center}
 ($T_{l,k}$)  holds for
 derivatives of arbitrary $l$ and $k \leq 2$.
\end{center}

 \noindent\emph{Proof of 2.)} From the estimates achieved in proving 1.)
 and equation (\ref{Peps}) we deduce that
 $$
 k_\eps := \pd_x^2(c_\eps\, \pd_x^2 u_\eps) = h_\eps -
 b_\eps\, \pd_x^2 u_\eps - \pd_t^2 u_\eps - L_\eps u_\eps
 $$
 satisfies for all $l \in \N$ with some $N_l$ an estimate
 \begin{equation}\label{mod22}
 \| \pd_t^l  k_\eps \|_{L^2(X_T)}
 = O(\eps^{-N_l}) \qquad (\eps\to 0).
 \end{equation}

 Here we are again in the same situation as in Step 2 of
 the proof of \cite[Theorem 3.1]{HoermannOparnica09},
 where now $k_\eps$ plays the role of $h_\eps$ there.
 Skipping again details of completely analogous arguments
 we arrive at the conclusion that the class of $(u_\eps)_\eps$
 defines a solution to the initial value problem.

 Moreover, we have by construction that $u_\eps(t) \in H^2_0((0,1))$
 for all $t \in [0,T]$, hence $u(0,t) = u(1,t) = 0$ and
 $\pd_x u(0,t) = \pd_x u(1,t) = 0$ and thus $u$ also satisfies
 the boundary conditions.

 {\bf Uniqueness:}\enspace \enspace  If $u =
 [(u_\eps)_\eps]$ satisfies initial-boundary value problem with
 zero initial values and right-hand side, then we have for all
 $q\geq 0$
 $$
 \|f_{1,\eps}\| = O(\eps^q), \quad
 \|f_{2,\eps}\| = O(\eps^q), \quad
 \|h_{\eps}\|_{L^2(X_T)} = O(\eps^q) \qquad \text{as } \eps\to 0.
 $$
 Therefore the energy estimate (\ref{eq:EEeps1}) in combination with
 (\ref{const_D_eps})-(\ref{const_F_eps}) imply for all $q \geq 0$ an
 estimate
 $$
 \|u_\eps\|_{L^2(X_T)} = O(\eps^q) \quad (\eps \to 0),
 $$
 from which we conclude that $(u_\eps)_\eps \in  \cN_{H^{\infty}(X_T)}$, i.e., $u = 0$.
 \end{proof}

%%%%%%%%%%%%%%%%%%%%%%%%%%%%%%%%%%%%%%%%%%%%%%%%%%%%%%%%%%%%%%%%%%
%%%%%%%%%%%%%%%%%%%%%%%%%%%%%%%%%%%%%%%%%%%%%%%%%%%%%%%%%%%%%%%%%%

% \bibliographystyle{plain}
% \bibliography{hko}

%%%%%%%%%%%%%%%%%%%%%%%%%%%%%%%%%%%%%%%%%%%%%%%%%%%%%%%%%%%%%%%%%%
%%%%%%%%%%%%%%%%%%%%%%%%%%%%%%%%%%%%%%%%%%%%%%%%%%%%%%%%%%%%%%%%%%
 \end{document}